\documentclass[12pt,a4paper,preprint]{elsarticle}

\usepackage[
a4paper,
left=30mm,
right=30mm,
top=28mm,
bottom=28mm
]{geometry}

\usepackage[utf8]{inputenc}
\usepackage[T1]{fontenc}
\usepackage{lmodern}
\usepackage{microtype}

\usepackage{amsmath}
\usepackage{amssymb}
\usepackage{amsfonts}
\usepackage{amsthm}
\usepackage{mathtools}
\usepackage{mathrsfs}

\usepackage{enumitem}
\usepackage{booktabs}
\usepackage{array}

\setlist[enumerate]{
	label=\arabic*.,
	leftmargin=2em
}
\setlist[itemize]{
	leftmargin=2em
}

\usepackage{setspace}
\usepackage{xcolor}

\definecolor{linkblue}{RGB}{0,70,140}
\definecolor{citegreen}{RGB}{0,100,70}

\usepackage[
colorlinks=true,
linkcolor=linkblue,
citecolor=citegreen,
urlcolor=linkblue
]{hyperref}

\usepackage[
nameinlink,
capitalise,
noabbrev
]{cleveref}

\newtheorem{theorem}{Theorem}[section]
\newtheorem{lemma}[theorem]{Lemma}
\newtheorem{proposition}[theorem]{Proposition}
\newtheorem{corollary}[theorem]{Corollary}

\theoremstyle{definition}
\newtheorem{definition}[theorem]{Definition}

\theoremstyle{remark}
\newtheorem{remark}[theorem]{Remark}

\hypersetup{
	pdftitle={
		Banach Spaces Generated by Finite-Valued Functions:
		Superreflexive Rigidity, Hankel Operators, and Universality
	},
	pdfauthor={
		Miroslav Hristov, Atanas Ilchev,
		Diana Nedelcheva, Boyan Zlatanov
	}
}

\begin{document}

\begin{frontmatter}

\title{Banach Spaces Generated by Finite-Valued Functions:
	Superreflexive Rigidity, Hankel Operators, and Universality}

\author[label1]{Miroslav Hristov}
\affiliation[label1]{organization={Konstantin Preslavsky University of Shumen},
addressline={115 Universitetska Str.},
city={Shumen},
postcode={9700},
country={Bulgaria}}
\author[label2]{Atanas Ilchev}
\affiliation[label2]{organization={University of Plovdiv Paisii Hilendarski},
	addressline={24 Tsar Assen Str.},
	city={Plovdiv},
	postcode={4000},
	country={Bulgaria}}
\author[label3]{Diana Nedelcheva}
\affiliation[label3]{organization={Technical University of Varna},
	addressline={1 Studentska Str.},
	city={Varna},
	postcode={9000},
	country={Bulgaria}}
\author[label2]{Boyan Zlatanov}

\begin{abstract}
	We study Banach spaces generated by uniformly bounded finite-valued
	functions and establish a rigidity phenomenon for their geometry.
	Let \(F\subset\mathbb R\) be finite, let
	\(\mathcal A\subset F^\Omega\), and set
	\(X_{\mathcal A}
	=
	\overline{\operatorname{span}}^{\,\|\cdot\|_\infty}\mathcal A
	\subseteq\ell_\infty(\Omega)\).
	We prove that \(X_{\mathcal A}\) is superreflexive if and only if
	\(\mathcal A\) is finite, equivalently, if and only if
	\(X_{\mathcal A}\) is finite-dimensional. The main ingredient is a
	finite-range spreading-model obstruction showing that a normalized
	weakly null sequence of finite-valued functions admits a subsequence
	whose spreading model has nonreflexive closed linear span.
	
	We apply this rigidity principle to the Banach space \(X_L\) generated by
	the characteristic functions of the left derivatives of a formal
	language \(L\). As a consequence, \(L\) is regular if and only if
	\(X_L\) is finite-dimensional, if and only if \(X_L\) is superreflexive,
	or equivalently, admits an equivalent uniformly convex norm. We further
	obtain a canonical dynamical representation of \(X_L\) as a
	coordinate-function subspace of \(C(K_L)\), where \(K_L\) is the
	associated compact shift-orbit closure.
	
	The corresponding language Hankel operator
	\(\mathcal H_L:\ell_1(\Sigma^*)\to\ell_\infty(\Sigma^*)\) is compact if
	and only if \(L\) is regular. For nonregular languages its distance from
	both the compact and finite-rank operators is exactly \(1/2\), and
	\(a_n(\mathcal H_L)=1/2\) for every \(n\geq2\). Finally, we construct a
	single binary language \(L_{\mathrm{univ}}\) such that
	\(X_{L_{\mathrm{univ}}}\) contains an isometric copy of \(C(\Delta)\);
	consequently, it contains an isometric copy of every separable real
	Banach space.
\end{abstract}

\begin{keyword}
formal languages \sep left derivatives \sep Banach spaces \sep
Hankel operators \sep Myhill--Nerode theorem \sep symbolic dynamics \sep
spreading models \sep superreflexivity \sep uniform convexity \sep
universal Banach spaces

\MSC[2020] 46B20 \sep 46B03 \sep 47B07 \sep 68Q45 \sep 37B10
\end{keyword}

\end{frontmatter}

\section{Introduction}\label{sec:introduction}

A recurring theme in Banach space theory is the extent to which the
geometric properties of a closed linear span are determined by the
structure of its generating family. This question becomes particularly
rigid when the generators are functions taking values in a fixed finite
set. Let \(\Omega\) be a nonempty set, let \(F\subseteq\mathbb R\) be
finite, and let
\(\mathcal A\subseteq F^\Omega\).

The associated Banach space
\(X_{\mathcal A}
=
\overline{\operatorname{span}\mathcal A}^{\,\|\cdot\|_\infty}
\subseteq \ell_\infty(\Omega)\)
may have a rich linear structure even though every member of the
generating family has finite range. The interaction between this
discrete structure of the generators and the geometry of
\(X_{\mathcal A}\) is the starting point of the present paper.

Our principal result establishes a strong rigidity phenomenon for such
spaces. We prove that
\[
\mathcal A\text{ is finite}
\Longleftrightarrow
X_{\mathcal A}\text{ is finite-dimensional}
\Longleftrightarrow
X_{\mathcal A}\text{ is superreflexive}.
\]

By the classical renorming theory of superreflexive spaces, the latter
condition is also equivalent to the existence of an equivalent uniformly
convex norm \cite{James1972,Enflo1972,Pisier1975}. Thus an infinite family
of functions taking values in a common finite subset of \(\mathbb R\)
can never generate a superreflexive Banach space when the closed linear
span is equipped with the supremum norm. Notice that no algebraic,
dynamical, or invariance assumption is imposed on the family
\(\mathcal A\).

The main ingredient in the proof is an obstruction obtained through
spreading models. Starting from an infinite finite-valued generating
family, reflexivity would produce a normalized weakly null sequence whose
elements still take values in a fixed finite set. A Ramsey stabilization
argument is then applied to its finite coordinate-pattern sets. The
resulting stabilized patterns determine a compact set
\(K\subseteq A^{\mathbb N}\)
which represents the spreading norm exactly. This realization makes it
possible to prove that the closed linear span of the resulting spreading
model is nonreflexive. Since spreading models are finitely representable
in the original space, this contradicts superreflexivity. The argument
combines classical ideas concerning spreading models
\cite{BrunelSucheston1974,ArgyrosKanellopoulosTyros2013}, finite
representability, and spaces of continuous functions on countable compact
sets \cite{PelczynskiSemadeni1959,BessagaPelczynski1960}.

A natural and particularly structured source of finite-valued families
arises from formal languages. Let \(\Sigma\) be a finite alphabet,
\(\Sigma^*\) the free monoid of finite words, and
\(L\subseteq\Sigma^*\) a language. For \(u\in\Sigma^*\), the left
quotient, or left derivative, of \(L\) by \(u\) is
\(u^{-1}L=\{v\in\Sigma^*:uv\in L\}\).
Representing this set by its characteristic function gives
\(q_u^L(v)
=
\mathbf 1_L(uv)\) for
\(v\in\Sigma^*\).
Hence every \(q_u^L\) is a \(\{0,1\}\)-valued element of
\(\ell_\infty(\Sigma^*)\). 

We associate with \(L\) the Banach space
\(X_L
=
\overline{\operatorname{span}}
\{q_u^L:u\in\Sigma^*\}^{\,\|\cdot\|_\infty}\).

Although the generators of \(X_L\) are Boolean-valued functions, the
space \(X_L\) itself is their closed real linear span and therefore need
not consist of Boolean-valued elements. Since \(\Sigma^*\) is countable,
\(X_L\) is always separable.

This construction connects the geometry of Banach spaces with classical
finite-state phenomena. The Myhill--Nerode theorem characterizes
regular languages by the finiteness of their families of left quotients
\cite{Myhill1957,Nerode1958,Eilenberg1974,HopcroftUllman1979}.
Distinct Boolean derivatives are separated by distance exactly one in
the supremum norm. Consequently,
\[
L\text{ is regular}
\Longleftrightarrow
\{q_u^L:u\in\Sigma^*\}\text{ is finite}
\Longleftrightarrow
X_L\text{ is finite-dimensional}.
\]

Applying the general finite-valued rigidity theorem strengthens this
classical correspondence to the next equivalent conditions
\begin{itemize}
	\item \(L\) is regular
    \item \(X_L\) is superreflexive
    \item \(X_L\) admits an equivalent uniformly convex norm.
\end{itemize}

Thus regularity, which is originally a discrete finite-state property,
admits a characterization in terms of the infinite-dimensional geometry
of a canonically associated Banach space.

The derivative-generated space also admits natural dynamical and
operator-theoretic representations. Identifying \(L\) with its
characteristic point
\[
x_L=\mathbf 1_L\in\{0,1\}^{\Sigma^*},
\]
the free monoid acts by shifts
\((\rho_vx)(u)=x(uv)\).

The orbit closure of \(x_L\) is a compact shift-invariant space
\(K_L\), and \(X_L\) is linearly isometric to the closed subspace of
\(C(K_L)\) generated by the corresponding coordinate functions.
Conversely, every compact binary shift system with a transitive point
arises from a language in this way. This connects the construction with
standard ideas from topological and symbolic dynamics
\cite{Ellis1969,Kitchens1998,GlasnerMegrelishvili2014}.

There is also a canonical Hankel-operator realization. Define
\[
\mathcal H_L:
\ell_1(\Sigma^*)\longrightarrow\ell_\infty(\Sigma^*)
\]
by
\[
(\mathcal H_La)(v)
=
\sum_{u\in\Sigma^*}a(u)\mathbf 1_L(uv).
\]

Its values on the standard unit vectors are precisely the Boolean
derivatives \(q_u^L\), and
\(\overline{\operatorname{ran}\mathcal H_L}=X_L\).
The Hankel representation is closely related to the classical
linear-algebraic approach to formal languages and weighted automata
\cite{CarlylePaz1971,Fliess1974,BerstelReutenauer2011}. We prove that
\(L\) is regular if and only if \(\mathcal H_L\) has finite rank, and
equivalently if and only if \(\mathcal H_L\) is compact. For a
nonregular language this qualitative dichotomy has an exact quantitative
form:
\[
\operatorname{dist}(\mathcal H_L,\mathcal F)
=
\operatorname{dist}(\mathcal H_L,\mathcal K)
=
\frac12,
\]
where \(\mathcal F\) and \(\mathcal K\) denote the finite-rank and compact
operators, respectively. 

Moreover, the approximation numbers satisfy
\(a_n(\mathcal H_L)=\frac12\) for \(n\geq2\).

At the opposite extreme from the rigidity phenomenon, we establish a
universality result. Using an explicit prefix-free binary coding, we
construct a language
\(L_{\mathrm{univ}}\subseteq\{0,1\}^*\)
for which \(X_{L_{\mathrm{univ}}}\) contains a linearly isometric copy of
\(C(\Delta)\), where
\(\Delta=\{0,1\}^{\mathbb N}\)
is the Cantor space. Since every separable real Banach space admits a
linear isometric embedding into \(C(\Delta)\), it follows that a single
derivative-generated space contains isometric copies of all separable
real Banach spaces. Thus the class considered here exhibits two
apparently opposite features: it is universal with respect to separable
subspace embeddings, while it is rigid with respect to the
superreflexivity of the whole space.

The paper is organized as follows.
Section~\ref{sec:preliminaries} collects the required background on
Banach-space geometry, finite representability, spreading models,
compact spaces, and formal languages.
Section~\ref{sec:main-results} states the main superreflexive rigidity
theorem for Banach spaces generated by finite-valued functions and its
principal consequence for formal languages.
Section~\ref{sec:auxiliary-results} establishes the auxiliary results
needed for the proof, including the finite-range spreading-model
obstruction and the finite representability of spreading models.
The proofs of the main results are given in
Section~\ref{sec:proof-main-results}.
Section~\ref{sec:formal-language-applications} develops the structural
applications to derivative-generated Banach spaces of formal languages.
Section~\ref{sec:dynamical-representation} gives the canonical dynamical
realization in terms of compact shift systems and coordinate-function
spaces.
Section~\ref{sec:hankel-operators} develops the Hankel-operator
representation and establishes the finite-rank, compactness, and exact
approximation-gap results.
Finally, Section~\ref{sec:universality} proves the universality theorem
and shows that a Banach space generated by a single binary language can
contain an isometric copy of every separable real Banach space.

\section{Preliminaries}\label{sec:preliminaries}

Throughout the paper, \(\mathbb N\) will denote the set of naturla numbers and \(\mathbb N_0=\mathbb{N}\cup\{0\}\). All considered Banach spaces are real ones.

\subsection{Banach-space preliminaries}

For a nonempty set \(\Omega\), let \(\ell_\infty(\Omega)\) denote the
Banach space of all bounded real-valued functions on \(\Omega\), equipped
with the norm
\(\|f\|_\infty=\sup_{\omega\in\Omega}|f(\omega)|\).

The space \(\ell_1(\Omega)\) consists of all absolutely summable families
\(a=(a(\omega))_{\omega\in\Omega}\), with
\(\|a\|_1=\sum_{\omega\in\Omega}|a(\omega)|\).

For \(\omega\in\Omega\), the corresponding standard unit vector in
\(\ell_1(\Omega)\) is denoted by \(e_\omega\).

If \(K\) is a compact Hausdorff space, \(C(K)\) denotes the Banach space
of all continuous real-valued functions on \(K\), equipped with the
supremum norm.

For Banach spaces \(X\) and \(Y\), we denote by
\(\mathcal L(X,Y)\), \(\mathcal F(X,Y)\), and \(\mathcal K(X,Y)\)
the spaces of bounded, finite-rank, and compact linear operators,
respectively. If \(T\in\mathcal L(X,Y)\) and
\(\mathcal M\subseteq\mathcal L(X,Y)\), we write
\(\operatorname{dist}(T,\mathcal M)
=
\inf_{S\in\mathcal M}\|T-S\|\).

For \(T\in\mathcal L(X,Y)\), the \(n\)-th approximation number is defined by
\[
a_n(T)
=
\inf\left\{
\|T-R\|:
R\in\mathcal L(X,Y),\ 
\operatorname{rank}R<n
\right\},
\qquad n\in\mathbb N.
\]
We refer to \cite{Pietsch1987} for the standard theory of approximation
numbers.

Recall that a Banach space \(Y\) is finitely representable in a Banach
space \(X\) if, for every finite-dimensional subspace \(E\subseteq Y\)
and every \(\varepsilon>0\), there exist a finite-dimensional subspace
\(F\subseteq X\) and an isomorphism \(T:E\to F\) such that
\[
\|T\|\,\|T^{-1}\|<1+\varepsilon.
\]

A Banach space \(X\) is superreflexive if every Banach space finitely
representable in \(X\) is reflexive. In particular, every superreflexive
Banach space is reflexive. We shall also use the classical equivalence \cite{James1972,Enflo1972,Pisier1975}
\[
X\text{ is superreflexive}
\Longleftrightarrow
X\text{ admits an equivalent uniformly convex norm}.
\]

The Eberlein--Šmulian theorem will be a key one in our investigations:

\begin{theorem}[Eberlein--Šmulian, \cite{Eberlein1947,Smulian1940,Whitley1967}]\label{thm:eberlein-smulian}
	A Banach space \(X\) is reflexive if and only if every bounded sequence
	in \(X\) has a weakly convergent subsequence.
\end{theorem}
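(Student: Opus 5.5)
The forward implication reduces to the separable case and uses a diagonal argument; for the converse I plan to go through James's norm-attainment characterization of reflexivity.

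\emph{Reflexive \(\Rightarrow\) sequential weak compactness.} Let \((x_n)\) be bounded in a reflexive space \(X\), and set \(Y=\overline{\operatorname{span}}\{x_n\}\).
\begin{enumerate}
\item \(Y\) is separable, and as a closed subspace of a reflexive space it is reflexive. Hence \(Y^{**}\cong Y\) is separable.
\item A dual space that is separable has a separable predual, so \(Y^*\) is separable. Fix a countable norm-dense set \(\{g_k\}\subseteq Y^*\).
\item The scalar sequences \((g_k(x_n))_n\) are bounded. A Cantor diagonal extraction gives a subsequence \((x_{n_j})\) such that \(\lim_j g_k(x_{n_j})\) exists for every \(k\).
\item Because \((x_{n_j})\) is bounded and \(\{g_k\}\) is dense, an \(\varepsilon/3\) argument shows that \(\phi(g)=\lim_j g(x_{n_j})\) exists for every \(g\in Y^*\).
\item The map \(\phi\) is linear and bounded, so \(\phi\in Y^{**}\). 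By reflexivity \(\phi\) is evaluation at some \(y\in Y\). Thus \(x_{n_j}\to y\) weakly in \(Y\).
\item Every \(f\in X^*\) restricts to an element of \(Y^*\), so the convergence is weak in \(X\) as well.
\end{enumerate}

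\emph{Sequential weak compactness \(\Rightarrow\) reflexive.} I will invoke James's theorem: a Banach space is reflexive if every \(f\in X^*\) attains its norm on \(B_X\).
\begin{enumerate}
\item Given \(f\in X^*\), choose \(x_n\in B_X\) with \(f(x_n)\to\|f\|\).
\item By hypothesis some subsequence \(x_{n_j}\) converges weakly to some \(x\in X\).
\item \(B_X\) is norm-closed and convex, hence weakly closed by Mazur/Hahn--Banach. Therefore \(\|x\|\le1\).
\item Weak convergence gives \(f(x)=\lim_j f(x_{n_j})=\|f\|\), so \(f\) attains its norm, and James's theorem yields reflexivity.
\end{enumerate}

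The main obstacle is this converse direction, since James's theorem is a deep result. If a more self-contained argument is wanted, I would instead follow Whitley's proof, in two steps.
\begin{enumerate}
\item Using Goldstine's theorem, it suffices to show that \(B_X\) is weakly compact, i.e.\ that every \(\xi\in B_{X^{**}}\) lies in \(X\).
\item Given such \(\xi\), inductively build \(x_n\in B_X\) together with finite sets of norming functionals. Each \(x_n\) is chosen to approximate \(\xi\) on the finitely many functionals constructed so far, and new functionals are added that nearly norm the differences in \(\operatorname{span}\{\xi,x_1,\dots,x_n\}\). A weak cluster point \(x\) of \((x_n)\) then satisfies \(\|\xi-x\|\) small, forcing \(\xi\in X\).
\end{enumerate}
The delicate bookkeeping in this construction is where the real work lies. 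Since the paper cites \cite{Whitley1967}, I would present the James route briefly and refer to Whitley for the elementary version.
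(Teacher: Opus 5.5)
Your proof is correct. It does not follow the paper, because the paper gives no proof of this statement: it is quoted from Eberlein, \v{S}mulian and Whitley. It is only ever used, in the proof of Theorem~\ref{thm:main-rigidity} and in Case~1 of Lemma~\ref{lem:finite-range-spreading}, in the direction ``reflexive $\Rightarrow$ every bounded sequence has a weakly convergent subsequence''.

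Your argument for that direction is the standard one and is complete: reduce to the separable space $Y$, get separability of $Y^*$ from that of $Y^{**}\cong Y$, extract diagonally, extend to all of $Y^*$ by an $\varepsilon/3$ argument, and identify the limit functional using reflexivity of $Y$. So the part the paper actually needs is fully covered.

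For the converse you replace the cited route (Eberlein's compactness argument, or Whitley's elementary construction) by James's norm-attainment theorem, and your four-step reduction is valid. It is not circular, since James's theorem can be proved through his sequential characterization of nonreflexivity without appeal to Eberlein--\v{S}mulian. It does reverse the usual logical order and imports a result much deeper than the one being proved. What it buys is brevity and a clear view of what sequential compactness is used for, namely producing a norming point for each functional. Whitley's argument needs only Hahn--Banach, Goldstine and finite norming sets, and it yields the stronger subset version (weakly countably compact sets are relatively weakly compact).

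If you write the Whitley version out, sharpen the last step: the conclusion is $\xi=x$, not merely that $\|\xi-x\|$ is small.
\begin{itemize}
\item By Mazur's theorem, $x$ lies in the norm-closed span of $\{x_n\}$.
\item So $\xi-x$ lies in the closed span of $\{\xi,x_1,x_2,\dots\}$, on which the collected functionals $f_k$ are norming up to a fixed constant.
\item For every $k$, $(\xi-x)(f_k)=0$, because $|(\xi-x_n)(f_k)|<1/n$ for all $n$ beyond the stage at which $f_k$ was introduced.
\end{itemize}
Hence $\|\xi-x\|=0$.
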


A sequence \((b_n)\) in a Banach space is called a basic sequence if it
is a Schauder basis for its closed linear span
\(B=\overline{\operatorname{span}}\{b_n:n\in\mathbb N\}\).
The associated biorthogonal functionals
\((b_n^*)\subseteq B^*\) are determined by
\(b_n^*(b_m)=\delta_{nm},\qquad m,n\in\mathbb N\).

Equivalently, if
\(x=\sum_{j=1}^{\infty}a_jb_j\in B\),
then \(b_n^*(x)=a_n\).

\begin{definition}\label{def:spreading-model}
	Let \(\{x_n\}_{n=1}^\infty\) be a sequence in a Banach space \(X\), and let
	\(\{x_{m_j}\}_{j=1}^\infty\) be a subsequence. A basic sequence \(\{b_i\}_{i=1}^\infty\) is said to be
	a spreading model generated by \(\{x_{m_j}\}_{j=1}^\infty\) if, for every
	\(k\in\mathbb N\) and every choice of scalars \(a_1,\ldots,a_k\),
	\[
	\left\|
	\sum_{i=1}^{k}a_ib_i
	\right\|
	=
	\lim_{\substack{j_1<\cdots<j_k\\ j_1\to\infty}}
	\left\|
	\sum_{i=1}^{k}a_i x_{m_{j_i}}
	\right\|.
	\]
	
	Equivalently, for every \(\varepsilon>0\) there exists \(N\in\mathbb N\)
	such that
	\[
	\left|
	\left\|
	\sum_{i=1}^{k}a_i x_{m_{j_i}}
	\right\|
	-
	\left\|
	\sum_{i=1}^{k}a_i b_i
	\right\|
	\right|
	<\varepsilon
	\]
	whenever \(N\leq j_1<\cdots<j_k\).
\end{definition}

The following classical result will be used in the proof of the main
rigidity theorem.

\begin{theorem}[Brunel--Sucheston, \cite{BrunelSucheston1974,ArgyrosKanellopoulosTyros2013}]\label{thm:brunel-sucheston}
	Every normalized weakly null sequence admits a subsequence generating a
	normalized spreading model. A spreading model generated by a normalized
	weakly null sequence is a suppression-\(1\)-unconditional basic sequence.
\end{theorem}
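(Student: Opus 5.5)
The statement to prove is the Brunel--Sucheston theorem. It has two parts: existence of a spreading model for a subsequence, and suppression-$1$-unconditionality when the generating sequence is normalized and weakly null. I would prove them in that order.

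\emph{Existence.} I would use Ramsey's theorem together with a diagonal argument. Fix $k\in\mathbb N$, a finite $\delta$-net $D_k$ of the unit ball of $\ell_1^k$ (coefficients with $\sum|a_i|\le 1$), and $\delta=1/k$. For a fixed $a\in D_k$, the function $(j_1<\dots<j_k)\mapsto \|\sum_i a_i x_{j_i}\|$ takes values in $[0,1]$. Partition $[0,1]$ into finitely many intervals of length $<1/k$ and apply Ramsey's theorem to $k$-element subsets. This gives an infinite set $M$ on which every value lies in a single interval. Repeating this for each $a\in D_k$ and each $k$, and passing to a diagonal subsequence $(x_{m_j})$, the limit in Definition~\ref{def:spreading-model} exists for every $a\in D_k$. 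The triangle inequality bounds the dependence on $a$ by $\sum|a_i-a_i'|$, so the limit exists for all scalars. The result is a seminorm on $c_{00}$, which I define as $\|\sum a_ib_i\|$. Normalization $\|b_i\|=1$ is immediate.

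\emph{Unconditionality.} Definiteness and basicity both follow once I show suppression-$1$-unconditionality, and this is where weak nullness enters. The claim is that for $A\subseteq\{1,\dots,k\}$,
\[
\Bigl\|\sum_{i\in A}a_ib_i\Bigr\|\le\Bigl\|\sum_{i=1}^k a_ib_i\Bigr\|.
\]
It suffices to remove one index $i_0$ and iterate. Fix $\varepsilon>0$ and choose indices $j_1<\dots<j_k$ far out. Using spreading invariance, consider many candidate vectors $x_{m_{j}}$ in the $i_0$ slot, say $j=p_1<p_2<\dots<p_N$ between $j_{i_0-1}$ and $j_{i_0+1}$. This is possible after relabeling, since the limit depends only on order. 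Weak nullness gives, via Mazur, a convex combination $\sum_r\lambda_r x_{m_{p_r}}$ of norm $<\varepsilon$. Then
\[
\sum_{i\neq i_0}a_ix_{m_{j_i}}=\sum_r\lambda_r\Bigl(\sum_{i\ne i_0}a_ix_{m_{j_i}}+a_{i_0}x_{m_{p_r}}\Bigr)-a_{i_0}\sum_r\lambda_rx_{m_{p_r}}.
\]
By the triangle inequality, the norm of the left side is at most $\max_r\|\cdot\|+|a_{i_0}|\varepsilon$. Each term inside the maximum is within $\varepsilon$ of $\|\sum a_ib_i\|$ by stabilization.

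\emph{Main obstacle.} The delicate step is ordering. The Mazur block must be located between the neighbouring indices, while all indices stay beyond the stabilization threshold $N$. I handle this by first choosing $j_1<\dots<j_{i_0-1}$ beyond $N$, then the convex block far enough out that its combination is small, and only then $j_{i_0+1}<\dots$ beyond the block. Mazur's lemma applies to any tail of a weakly null sequence, so the block can be taken after any prescribed index.

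\emph{Conclusion.} Letting $\varepsilon\to0$ yields the removal inequality. In particular $|a_{i_0}|=\|a_{i_0}b_{i_0}\|\le\|\sum a_ib_i\|$, so the seminorm is a norm. Projecting onto initial segments is a special case of suppression, which gives the basis constant $1$. Hence $(b_i)$ is a normalized suppression-$1$-unconditional basic sequence in the completion.
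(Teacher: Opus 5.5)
The paper gives no proof of this statement. It quotes it as a black box from Brunel--Sucheston and Argyros--Kanellopoulos--Tyros, so your argument supplies something the paper omits rather than competing with a proof in it.

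What you give is the standard textbook route: Ramsey plus a diagonal argument for existence, and Mazur's lemma for the suppression inequality. It is correct. The order of choices in the unconditionality step is exactly what is needed: first $j_1<\dots<j_{i_0-1}$ beyond the threshold, then the small convex block, then the remaining indices. Your deduction from the removal inequality is also right: the seminorm is a norm, because $|a_{i_0}|\le\|\sum a_ib_i\|$, and $(b_i)$ is monotone basic. Compared with the citation, your proof is self-contained and establishes precisely the two properties that Lemma~\ref{lem:finite-range-spreading} uses. The cited sources give the general theory (arbitrary bounded sequences, seminormed spreading models), which the paper never needs.

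One step in the existence part should be tightened. As written, $k$-term combinations are treated only at stage $k$, with net mesh and interval length $1/k$. That bounds the oscillation of $k$-term sums by $O(1/k)$ but does not make it tend to $0$, so ``the limit exists for every $a\in D_k$'' does not yet follow. There are two easy fixes:
\begin{enumerate}
\item At stage $n$, use mesh $1/n$ for all lengths $k\le n$.
\item Pad with zeros: a vector of the unit ball of $\ell_1^k$, padded with zeros, lies in the unit ball of $\ell_1^{k'}$. Every increasing $k$-tuple from the stage-$k'$ set $M_{k'}$ extends to a $k'$-tuple in $M_{k'}$. Hence the oscillation of $k$-term sums over tuples with $j_1\ge k'$ is at most $3/k'\to0$.
\end{enumerate}
Also, you use $N$ both for the stabilization threshold and for the length of the Mazur block; rename one of them.
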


Jus for completeness, we will present the next proposition with a proof.

\begin{proposition}\label{prop:spreading-finite-representability}
	Let \(\{x_n\}_{n=1}^\infty\) be a bounded sequence in a Banach space \(X\), and let
	\(\{x_{m_j}\}_{j=1}^\infty\) be a subsequence generating a basic spreading model
	\(\{b_n\}_{n=1}^\infty\). 
	
	Then
	\(E=\overline{\operatorname{span}}\{b_n:n\in\mathbb N\}\)
	is finitely representable in \(X\).
\end{proposition}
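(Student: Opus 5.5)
The plan is to show that the finite-dimensional subspaces spanned by finitely many \(b_n\) are almost isometric to spans of far-out terms of the subsequence, and then pass to arbitrary finite-dimensional subspaces of \(E\) by a perturbation argument.

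\textbf{Step 1: first \(k\) vectors.} Fix \(k\in\mathbb N\) and \(\varepsilon>0\), and set \(E_k=\operatorname{span}\{b_1,\dots,b_k\}\). Since \((b_n)\) is basic, \(E_k\) is \(k\)-dimensional. Its unit sphere \(S_k\) is compact, so we choose a finite \(\delta\)-net \(\{a^{(1)},\dots,a^{(M)}\}\) of coefficient vectors with \(\sum_i a^{(r)}_i b_i\in S_k\). By Definition~\ref{def:spreading-model}, applied to these finitely many coefficient vectors, there is \(N\) such that for all \(N\le j_1<\dots<j_k\) and every \(r\le M\),
\[
\Bigl|\Bigl\|\sum_i a^{(r)}_i x_{m_{j_i}}\Bigr\|-1\Bigr|<\delta .
\]
Fix one such choice \(j_1<\dots<j_k\) and define \(T:E_k\to X\) by \(Tb_i=x_{m_{j_i}}\).

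\textbf{Step 2: \(T\) is an almost isometry on \(E_k\).} This is the main obstacle: passing from the net to all of \(S_k\). The first thing to control is a uniform bound on \(\|T\|\) that does not depend on \(j\). Since the \(b_i\) are linearly independent, the coordinate map \(S_k\ni y\mapsto(a_i)\) is bounded, say \(\sum_i|a_i|\le C_k\) on \(S_k\). Because \(\sup_n\|x_n\|\le B\), this gives \(\|T\|\le C_kB\), independent of \(j\). Now take \(y\in S_k\) and a net point \(y_r\) with \(\|y-y_r\|<\delta\). Then
\[
\bigl|\|Ty\|-1\bigr|\le \delta+\|T\|\,\delta\le \delta(1+C_kB).
\]
Choosing \(\delta\) small relative to \(C_kB\) yields \(1-\eta\le\|Ty\|\le1+\eta\) on \(S_k\). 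In particular \(T\) is injective, \(F=T(E_k)\) is \(k\)-dimensional, and \(\|T\|\,\|T^{-1}\|\le(1+\eta)/(1-\eta)<1+\varepsilon\).

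\textbf{Step 3: arbitrary finite-dimensional subspaces.} Let \(G\subseteq E\) be finite-dimensional with basis \(g_1,\dots,g_d\). Since \(\bigcup_k E_k\) is dense in \(E\), we can pick \(g_l'\in E_k\), for \(k\) large, with \(\|g_l-g_l'\|\) tiny. A standard small-perturbation lemma then gives that the linear map \(S:G\to E_k\) with \(g_l\mapsto g_l'\) satisfies \(\|S\|\,\|S^{-1}\|<1+\varepsilon\) onto its image \(G'=S(G)\). To prove this, use that the coordinate functionals of the basis \(g_l\) on \(G\) are bounded, so \(\|S-\mathrm{id}\|\) is small. Composing \(S\) with the map \(T\) of Step 2 restricted to \(G'\), and choosing both distortions small, gives the finite representability of \(E\) in \(X\).
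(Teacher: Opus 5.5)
Your proposal is correct and follows the same route as the paper. Both arguments embed $G$ almost isometrically into some $E_k=\operatorname{span}\{b_1,\dots,b_k\}$, then send $E_k$ almost isometrically onto the span of far-out terms $x_{m_{j_1}},\dots,x_{m_{j_k}}$ using the spreading-model limit on a finite net, and finally compose. The differences are minor: the paper moves $G$ into $E_k$ with the partial-sum projection $P_k|_G$ rather than your basis-perturbation map $S$, and it only calls ``standard'' the net-to-sphere estimate that you carry out explicitly via the index-independent bound $\|T\|\le C_kB$.
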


\begin{proof}
	Let \(G\subseteq E\) be finite-dimensional and fix \(\varepsilon>0\).
	Since \(\{b_n\}_{n=1}^\infty\) is a Schauder basis for \(E\), the canonical projections
	\(P_m:E\to\operatorname{span}\{b_1,\ldots,b_m\}\)
	converge uniformly to the identity on the unit sphere of \(G\).
	
	Consequently, for every \(\delta>0\), if \(m\) is sufficiently large,
	\(P_m|_G\) is an isomorphism onto its range and
	\(\|P_m|_G\|\,\|(P_m|_G)^{-1}\|<1+\delta\)
	
	Set
	\(E_m=\operatorname{span}\{e_1,\ldots,e_m\}\).
	By the spreading-model property, sufficiently large indices
	\(j_1<\cdots<j_m\) can be chosen so that
	\[
	U_m\left(\sum_{i=1}^{m}a_i b_i\right)
	=
	\sum_{i=1}^{m}a_i x_{m_{j_i}}
	\]
	defines an isomorphism \(U_m:E_m\to X\) satisfying
	\(\|U_m\|\,\|U_m^{-1}\|<1+\delta\).
	The estimate is uniform on the unit sphere of \(E_m\), by compactness
	and a standard finite-net argument.
	
	Therefore
	\(T=U_m\circ P_m|_G\)
	is an isomorphism from \(G\) onto a finite-dimensional subspace of \(X\)
	and
	\(\|T\|\,\|T^{-1}\|<(1+\delta)^2\).
	
	Choosing \(\delta>0\) such that
	\((1+\delta)^2<1+\varepsilon\)
	proves the assertion.
\end{proof}

\subsection{Compactness and \(C(K)\)-spaces}

We shall use the infinite Ramsey theorem in its classical form.

\begin{theorem}[Infinite Ramsey theorem, \cite{Ramsey1930}]\label{thm:ramsey}
	Let \(k,r\in\mathbb N\). If the \(k\)-element subsets of an infinite set
	are colored with \(r\) colors, then there exists an infinite subset all
	of whose \(k\)-element subsets have the same color.
\end{theorem}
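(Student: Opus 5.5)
We prove the infinite Ramsey theorem by induction on the size \(k\) of the colored subsets. Throughout, the number \(r\) of colors is fixed. We may assume the infinite set is a subset \(S\subseteq\mathbb N\), since any infinite set contains a countable subset and a monochromatic subset of that subset suffices. For \(k=1\), the coloring partitions \(S\) into finitely many classes. By the pigeonhole principle one class is infinite, and it is monochromatic.

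For the inductive step, assume the statement holds for \(k-1\) and for all finite numbers of colors. Let \(c:[S]^k\to\{1,\dots,r\}\) be a coloring of the \(k\)-element subsets of \(S\). We build a sequence \(s_1<s_2<\cdots\) in \(S\), a decreasing chain of infinite sets \(S\supseteq S_1\supseteq S_2\supseteq\cdots\), and colors \(\chi(s_i)\) as follows.

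Put \(S_0=S\). Given the infinite set \(S_{i-1}\), let \(s_i=\min S_{i-1}\). Color the \((k-1)\)-subsets \(B\) of \(S_{i-1}\setminus\{s_i\}\) by \(c_i(B)=c(\{s_i\}\cup B)\). By the induction hypothesis there is an infinite set \(S_i\subseteq S_{i-1}\setminus\{s_i\}\) that is monochromatic for \(c_i\). Call its color \(\chi(s_i)\).

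By construction, each \(s_j\) with \(j>i\) lies in \(S_i\). Hence for every \(k\)-set \(\{s_{i}<s_{j_2}<\cdots<s_{j_k}\}\), the color depends only on its least element:
\[
c(\{s_i,s_{j_2},\dots,s_{j_k}\})=\chi(s_i).
\]

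Now apply the pigeonhole principle (the case \(k=1\)) to the coloring \(\chi\) of the infinite set \(\{s_i:i\in\mathbb N\}\). This gives an infinite set \(H\) of the \(s_i\) on which \(\chi\) is constant. Any \(k\)-subset of \(H\) has its least element in \(H\). By the displayed identity, its color is that constant value, so \(H\) is the required infinite monochromatic set.

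The only delicate point is the bookkeeping in the diagonal construction. Each \(S_i\) must be chosen inside \(S_{i-1}\setminus\{s_i\}\), so that all later elements \(s_j\) lie in every earlier \(S_i\). This nesting is exactly what makes the color of a \(k\)-set depend only on its minimum. The result is classical; the paper cites \cite{Ramsey1930} for it.
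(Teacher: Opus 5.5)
Your proof is correct. It is the classical argument going back to Ramsey: induct on \(k\), fix the minimum \(s_i\) of the current infinite set, apply the \((k-1)\)-case to the induced coloring of the remaining nested infinite set, and finish with the pigeonhole principle on the colors \(\chi(s_i)\). The paper gives no proof of its own here; it quotes the theorem from Ramsey's 1930 paper. It uses it in Lemma~\ref{lem:finite-range-spreading}, for each fixed \(k\) with at most \(2^{|A|^k}\) colors and then a diagonal selection, so your fixed-\(k\), finitely-many-colors argument is exactly the version needed there.
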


The following property of spaces of continuous functions on countable
compact spaces will be used to detect nonreflexivity.

\begin{theorem}[\cite{PelczynskiSemadeni1959,BessagaPelczynski1960}]\label{thm:pelczynski-semadeni}
	Let \(K\) be a countable compact Hausdorff space. Then every
	infinite-dimensional closed subspace of \(C(K)\) contains a subspace
	isomorphic to \(c_0\). Consequently, every such subspace is nonreflexive.
\end{theorem}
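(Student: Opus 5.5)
The plan is to reduce the statement to the \(c_0\)-saturation of \(C(K)\) and to prove that by transfinite induction on the Cantor--Bendixson rank of \(K\). The nonreflexivity part is then immediate: \(c_0\) is nonreflexive, and a closed subspace of a reflexive space is reflexive. So an infinite-dimensional closed subspace \(Y\subseteq C(K)\) containing \(c_0\) cannot be reflexive.

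\emph{Preliminary reductions.} A countable compact Hausdorff space \(K\) is scattered, since a nonempty perfect compact Hausdorff space is uncountable. It is also metrizable. So the derived sets \(K^{(\beta)}\) decrease to \(\emptyset\) at some countable successor ordinal \(\gamma+1\), and \(K^{(\gamma)}\) is finite and nonempty. Splitting \(K\) into finitely many clopen pieces, each containing exactly one point of \(K^{(\gamma)}\), gives \(C(K)\) as a finite \(\ell_\infty\)-sum. A finite direct sum of \(c_0\)-saturated spaces is \(c_0\)-saturated (a gliding-hump argument as below, or a direct projection argument). So we may assume \(K^{(\gamma)}=\{p\}\).

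\emph{Decomposition.} Choose a decreasing clopen neighbourhood base \(V_1\supseteq V_2\supseteq\cdots\) of \(p\), with \(\bigcap_n V_n=\{p\}\). Put \(K_n=V_n\setminus V_{n+1}\) and \(K_0=K\setminus V_1\). Each \(K_n\) is clopen, compact and countable, and \(K_n^{(\gamma)}=\emptyset\), so its Cantor--Bendixson rank is strictly smaller than that of \(K\). The map
\[
f\mapsto \bigl(f(p),\,(f|_{K_n}-f(p))_{n\ge 0}\bigr)
\]
is an isomorphism of \(C(K)\) onto \(\mathbb R\oplus\bigl(\bigoplus_n C(K_n)\bigr)_{c_0}\). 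It is onto the \(c_0\)-sum because continuity at \(p\) means exactly that \(\sup_{K_n}|f-f(p)|\to0\). By the induction hypothesis each \(C(K_n)\) is \(c_0\)-saturated, with the finite case as the base.

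\emph{Key lemma (the main obstacle).} We need: if each \(Z_n\) is \(c_0\)-saturated, then \(Z=(\bigoplus Z_n)_{c_0}\) is \(c_0\)-saturated; adding the one-dimensional summand is harmless. Let \(Y\subseteq Z\) be infinite-dimensional and closed, and let \(P_N\) denote the projection onto the first \(N\) coordinates. There are two cases.

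\begin{itemize}
\item If for some \(N\) the restriction \(P_N|_Y\) is an isomorphism on an infinite-dimensional closed subspace \(Y'\), then \(P_N(Y')\) is an infinite-dimensional subspace of the finite sum \(Z_1\oplus\cdots\oplus Z_N\). That finite sum is \(c_0\)-saturated, so \(P_N(Y')\) contains \(c_0\), and pulling back gives \(c_0\subseteq Y'\subseteq Y\).
\item Otherwise, for every \(N\) and every \(\varepsilon\), every finite-codimensional subspace of \(Y\) contains a unit vector \(y\) with \(\|P_Ny\|<\varepsilon\). Since \(\|(I-P_M)y\|\to0\) for each fixed \(y\), a gliding-hump selection produces normalized \(y_k\in Y\) and integers \(N_1<N_2<\cdots\) such that \(y_k\) is within \(\varepsilon 2^{-k}\) of a block supported on coordinates \((N_{k-1},N_k]\). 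Disjointly supported normalized blocks in a \(c_0\)-sum are isometrically equivalent to the unit vector basis of \(c_0\). The principle of small perturbations then gives that \((y_k)\) is equivalent to the \(c_0\) basis, so \(c_0\subseteq Y\).
\end{itemize}

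The delicate point is the dichotomy in the first case. Failure of "\(P_N|_Y\) is strictly singular" yields an infinite-dimensional subspace on which \(P_N\) is an isomorphism, and the selection in the second case must be done inside successive finite-codimensional subspaces. I expect this bookkeeping, rather than the decomposition itself, to require the most care.
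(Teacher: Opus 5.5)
The paper gives no proof of this statement. It is quoted from Pe\l czy\'nski--Semadeni and Bessaga--Pe\l czy\'nski and used as a black box in Case~2 of Lemma~\ref{lem:finite-range-spreading}. Your argument is a correct, self-contained proof along the standard lines: Cantor--Bendixson induction, the isomorphism \(C(K)\cong\mathbb R\oplus(\bigoplus_n C(K_n))_{c_0}\), and a dichotomy for \(c_0\)-sums. Compared with the paper, it supplies an elementary proof of the one external ingredient in the nonreflexivity step, at the cost of length.

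Your surjectivity check for the decomposition and your Case~2 perturbation argument are right. In Case~2 you actually need no bookkeeping in finite-codimensional subspaces. Applying strict singularity of \(P_{N_{k-1}}|_Y\) to \(Y\) itself gives \(\|P_{N_{k-1}}y_k\|<\varepsilon2^{-k-1}\). Then \((y_k)\) is a small perturbation of a disjoint block sequence, which is automatically basic.

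The one under-justified step is the finite-sum fact, which you use twice: in the reduction to a single top point and in Case~1. It cannot come from ``the gliding hump as below'': applied to a finite sum, that dichotomy just returns Case~1 for the finite sum itself, so the argument is circular. The fact is true, and there are two ways to secure it.

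\emph{Via Kato's lemma.} Let \(N\) be least with \(P_N|_Y\) not strictly singular, and take \(Y'\subseteq Y\) on which \(P_N\) is an isomorphism. If \(N>1\), Kato's lemma (this is where the finite-codimensional selection belongs) gives an infinite-dimensional \(Y''\subseteq Y'\) with \(\|P_{N-1}|_{Y''}\|\) small. Since the norm is a maximum over coordinates, the single coordinate projection onto the \(N\)-th summand is then an isomorphism on \(Y''\), and you finish in that summand alone.

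\emph{Bypassing the fact.} Note that \(C(K_0)\oplus_\infty\cdots\oplus_\infty C(K_N)\) is isometric to \(C(K\setminus V_{N+1})\). The set \(K\setminus V_{N+1}\) is clopen with \((K\setminus V_{N+1})^{(\gamma)}=\emptyset\), so the induction hypothesis applies to it directly. If \(K^{(\gamma)}=\{p_1,\ldots,p_m\}\), do the decomposition around all \(p_i\) at once. This gives \(C(K)\cong\mathbb R^m\oplus(\bigoplus_n C(K_n))_{c_0}\) with the same property. The \(\mathbb R^m\) summand is harmless because \(Y\cap(\{0\}\oplus Z)\) has finite codimension in \(Y\).
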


\begin{theorem}[\cite{Tychonoff1930}]\label{thm:tychonoff}
	An arbitrary product of compact topological spaces is compact with
	respect to the product topology.
\end{theorem}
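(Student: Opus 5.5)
We plan to use the ultrafilter characterization of compactness: a topological space \(Y\) is compact if and only if every ultrafilter on \(Y\) converges to at least one point. Let \(\{X_i\}_{i\in I}\) be compact spaces, let \(X=\prod_{i\in I}X_i\) carry the product topology, and let \(\pi_i:X\to X_i\) be the coordinate projections. Recall that the product topology is generated by the subbase of sets \(\pi_i^{-1}(V)\) with \(i\in I\) and \(V\subseteq X_i\) open. Given an ultrafilter \(\mathcal U\) on \(X\), we want to find a point of \(X\) to which \(\mathcal U\) converges.

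\textbf{Step 1 (push forward).} For each \(i\), form the image filter \(\pi_i(\mathcal U)=\{B\subseteq X_i:\pi_i^{-1}(B)\in\mathcal U\}\). This is again an ultrafilter. Indeed, for any \(B\subseteq X_i\) the sets \(\pi_i^{-1}(B)\) and \(\pi_i^{-1}(X_i\setminus B)\) partition \(X\), so exactly one of them lies in \(\mathcal U\).

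\textbf{Step 2 (choose limits).} Since \(X_i\) is compact, \(\pi_i(\mathcal U)\) converges to some point. Using the axiom of choice, pick one such limit \(x_i\in X_i\) for every \(i\), and set \(x=(x_i)_{i\in I}\).

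\textbf{Step 3 (convergence in the product).} We check that \(\mathcal U\to x\).
\begin{itemize}
\item Let \(\pi_i^{-1}(V)\) be a subbasic neighbourhood of \(x\), so \(V\) is an open neighbourhood of \(x_i\) in \(X_i\).
\item Since \(\pi_i(\mathcal U)\to x_i\), we have \(V\in\pi_i(\mathcal U)\), that is, \(\pi_i^{-1}(V)\in\mathcal U\).
\item A basic neighbourhood of \(x\) is a finite intersection of subbasic ones, so it lies in \(\mathcal U\) because filters are closed under finite intersections.
\item Every neighbourhood of \(x\) contains a basic one, so it also lies in \(\mathcal U\).
\end{itemize}
Hence \(\mathcal U\to x\), and \(X\) is compact.

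\textbf{Main obstacle.} The real work is in the characterization of compactness used at the start, not in the product argument itself.
\begin{itemize}
\item \emph{Compact implies ultrafilter convergence.} If \(\mathcal U\) had no limit, each point would have an open neighbourhood not in \(\mathcal U\). Finitely many of these cover \(Y\), so their union \(Y\) would not be in \(\mathcal U\), a contradiction, because an ultrafilter containing none of finitely many sets contains none of their union.
\item \emph{Ultrafilter convergence implies compact.} Suppose an open cover had no finite subcover. Then the complements of the cover's members have the finite intersection property. By the ultrafilter lemma, a consequence of Zorn's lemma, they extend to an ultrafilter. Its limit lies in some member of the cover, yet that member's complement belongs to the ultrafilter, which is a contradiction.
\end{itemize}
Together with the choice in Step 2, this reliance on choice is unavoidable, since Tychonoff's theorem is equivalent to the axiom of choice. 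If one prefers to avoid ultrafilters, the fallback is Alexander's subbase lemma. Given a cover by subbasic sets \(\pi_i^{-1}(V)\) with no finite subcover, for each \(i\) the sets \(V\) used at coordinate \(i\) fail to cover \(X_i\) by compactness. Choosing a point \(x_i\) outside them for every \(i\) yields an uncovered point \(x\), a contradiction.
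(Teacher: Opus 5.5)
Your proof is correct. The pushforward $\pi_i(\mathcal U)$ is an ultrafilter. The coordinatewise limits $x_i$, selected by choice, assemble into a limit of $\mathcal U$, because every subbasic neighbourhood $\pi_i^{-1}(V)$ of $x$ already lies in $\mathcal U$. Both directions of the ultrafilter characterization of compactness are argued soundly, and so is your Alexander-subbase alternative.

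The paper, however, gives no proof of this statement. It quotes Tychonoff's theorem as a classical tool. It only ever applies it to countable products of finite discrete spaces, namely $A^{\mathbb N}$ with $A$ finite and $\Omega_\Sigma=\{0,1\}^{\Sigma^*}$ with $\Sigma^*$ countable.

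Your route gives the full theorem for arbitrary index sets and arbitrary compact factors, at the cost of the ultrafilter lemma and the axiom of choice. For what the paper actually needs, an elementary choice-free argument suffices:
\begin{enumerate}
\item Enumerate the coordinates, fix an order on $A$, and suppose an open cover of $A^{\mathbb N}$ has no finite subcover.
\item Each cylinder $[s]=\{\eta:\eta\text{ extends }s\}$ is the finite union of the cylinders $[sa]$, $a\in A$. Starting from the empty word, you can therefore inductively choose the least $a$ for which $[sa]$ is not finitely covered.
\item This produces a point $\eta$ none of whose initial cylinders is finitely covered.
\item But $\eta$ lies in some member $O$ of the cover. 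Being open, $O$ contains a cylinder $[\eta_1\cdots\eta_k]$, which is then covered by $O$ alone, a contradiction.
\end{enumerate}
This K\"onig-type argument also exhibits the cylinders as the balls of the ultrametric $d(\eta,\xi)=2^{-\min\{i:\eta_i\neq\xi_i\}}$ for $\eta\neq\xi$. That gives directly the metrizability the paper asserts immediately after stating the theorem.
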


In particular, if \(A\) is a finite discrete space, then
\(A^{\mathbb N}\), equipped with the product topology, is compact by
Theorem~\ref{thm:tychonoff}. Since the product is countable,
\(A^{\mathbb N}\) is also metrizable. Hence, every closed subset of
\(A^{\mathbb N}\) is compact and metrizable.

Let
\(\Delta=\{0,1\}^{\mathbb N}\)
denote the Cantor space. We shall use the following two classical facts.

\begin{theorem}[Density of clopen simple functions, \cite{Stone1948}]
	\label{thm:clopen-density}
	The linear span of the characteristic functions of the clopen subsets of
	\(\Delta\) is uniformly dense in \(C(\Delta)\).
\end{theorem}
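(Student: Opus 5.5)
The plan is to use the Stone--Weierstrass theorem on the compact Hausdorff space \(\Delta\). Let \(\mathcal S\) denote the linear span of the characteristic functions \(\mathbf 1_U\) of clopen sets \(U\subseteq\Delta\). We will check the hypotheses of Stone--Weierstrass. First, every clopen \(U\) has \(\mathbf 1_U\) continuous, since \(\mathbf 1_U^{-1}(W)\) is one of \(\emptyset,U,\Delta\setminus U,\Delta\), all open. Hence \(\mathcal S\subseteq C(\Delta)\).

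Next, \(\mathcal S\) is a subalgebra containing the constants. Indeed \(\mathbf 1_\Delta=1\), and \(\mathbf 1_U\mathbf 1_V=\mathbf 1_{U\cap V}\) with \(U\cap V\) clopen. By bilinearity, products of elements of \(\mathcal S\) remain in \(\mathcal S\).

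The only substantive step is point separation. Take \(x\neq y\) in \(\Delta=\{0,1\}^{\mathbb N}\) and choose \(n\) with \(x_n\neq y_n\). Consider the cylinder
\[
U=\{z\in\Delta: z_n=x_n\}.
\]
It is open, being the preimage of a point under the continuous coordinate projection into the discrete set \(\{0,1\}\). It is also closed, since its complement is the analogous cylinder for the other value. Moreover \(\mathbf 1_U(x)=1\neq 0=\mathbf 1_U(y)\).

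Since \(\Delta\) is compact Hausdorff (Theorem~\ref{thm:tychonoff}; a product of Hausdorff spaces is Hausdorff), the real Stone--Weierstrass theorem applies. It gives that \(\mathcal S\) is uniformly dense in \(C(\Delta)\).

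No step is a real obstacle; the only point requiring care is that cylinders are clopen. As an alternative avoiding Stone--Weierstrass, one could use uniform continuity with respect to the metric \(d(x,y)=2^{-\min\{n:x_n\neq y_n\}}\). Given \(f\in C(\Delta)\) and \(\varepsilon>0\), choose \(N\) so that \(f\) varies by less than \(\varepsilon\) on each of the \(2^N\) cylinders determined by the first \(N\) coordinates. Then \(\sum_C f(z_C)\mathbf 1_C\), with \(z_C\in C\) arbitrary, is within \(\varepsilon\) of \(f\). I would likely present this direct argument, since it is fully elementary.
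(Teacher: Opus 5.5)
Your proof is correct and is essentially the paper's route: the paper gives no proof of its own and simply cites Stone's generalized Weierstrass theorem, and you verify exactly its hypotheses (clopen indicators are continuous, their span is a unital subalgebra because \(\mathbf 1_U\mathbf 1_V=\mathbf 1_{U\cap V}\), and coordinate cylinders separate points of the compact Hausdorff space \(\Delta\)). Your elementary alternative, via uniform continuity and the partition of \(\Delta\) into the \(2^N\) cylinders of length \(N\), is also valid, provided you note that the metric \(d\) induces the product topology.
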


\begin{theorem}[Cantor-space form of the Banach--Mazur theorem, \cite{Banach1932,Alexandroff1927}]
	\label{thm:banach-mazur}
	Every separable real Banach space admits a linear isometric embedding into
	\(C(\Delta)\).
\end{theorem}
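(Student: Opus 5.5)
The statement to be proved is Theorem~\ref{thm:banach-mazur}: every separable real Banach space embeds linearly and isometrically into \(C(\Delta)\). The plan is to factor the embedding through \(C(B_{X^*})\) and then replace the weak\(^*\)-compact ball by the Cantor space.

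First, let \(X\) be separable and let \(K=B_{X^*}\) carry the weak\(^*\) topology. By Theorem~\ref{thm:tychonoff} (through the Banach--Alaoglu argument), \(K\) is compact. Separability of \(X\) makes \(K\) metrizable: if \(\{x_n\}\) is dense in \(X\), then
\[
d(f,g)=\sum_n 2^{-n}\frac{|f(x_n)-g(x_n)|}{1+\|x_n\|}
\]
is a compatible metric on \(K\). The evaluation map \(J:X\to C(K)\), \(Jx(f)=f(x)\), is linear, and it is isometric by Hahn--Banach, since \(\|x\|=\max_{f\in K}|f(x)|\). This reduces the problem to embedding \(C(K)\) isometrically into \(C(\Delta)\) for a compact metric space \(K\).

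Second, I would use the classical Alexandroff--Hausdorff theorem: every nonempty compact metric space \(K\) is a continuous image of \(\Delta\). To construct a continuous surjection \(\varphi:\Delta\to K\), cover \(K\) by finitely many closed sets of diameter \(<1\) and index them by binary strings, repeating sets as needed so that there are exactly \(2^{k_1}\) of them. Inside each piece, cover again by closed sets of diameter \(<1/2\), again indexed by binary strings of a fixed length. Continuing this way, each infinite branch \(\sigma\in\Delta\) determines a decreasing sequence of nonempty closed sets whose diameters tend to \(0\), and \(\varphi(\sigma)\) is the unique point in their intersection. Continuity of \(\varphi\) follows from the shrinking diameters, and surjectivity from the fact that the covers exhaust \(K\) at every level. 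This construction is the main technical step. The delicate points are ensuring that every set along a branch is nonempty, which I would handle by padding with repeated nonempty pieces, and keeping all branches of equal length at each level so that the index set is exactly \(\{0,1\}^{\mathbb N}\).

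Finally, the composition operator \(C_\varphi:C(K)\to C(\Delta)\), \(f\mapsto f\circ\varphi\), is linear. Because \(\varphi\) is surjective, \(\|f\circ\varphi\|_\infty=\|f\|_\infty\), so \(C_\varphi\) is an isometry. The composite \(C_\varphi\circ J\) is then the required linear isometric embedding of \(X\) into \(C(\Delta)\).
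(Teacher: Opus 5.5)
Your proof is correct. The paper gives no argument of its own for this statement and only cites Banach and Alexandroff; your route (the Hahn--Banach isometric evaluation map into \(C(B_{X^*})\) with the weak\(^*\) topology, metrizability from separability, an Alexandroff--Hausdorff continuous surjection \(\varphi:\Delta\to B_{X^*}\), and the isometric composition operator \(f\mapsto f\circ\varphi\)) is exactly the classical proof those references supply.
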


\subsection{Formal-language preliminaries}

Let \(\Sigma\) be a finite nonempty alphabet. The free monoid generated by
\(\Sigma\) is denoted by \(\Sigma^*\), its identity element is the empty
word \(\varepsilon\), and its operation is concatenation. A formal
language over \(\Sigma\) is a subset \(L\subseteq\Sigma^*\).

The set \(\Sigma^*\), equipped with concatenation and with the empty
word \(\varepsilon\) as its identity element, is the free monoid
generated by \(\Sigma\). Concatenation in \(\Sigma^*\) satisfies the
left and right cancellation laws:
\(uw=uv\ \Longrightarrow\ w=v\), and \(wu=vu\ \Longrightarrow\ w=v\).

For a set \(B\), its characteristic function is denoted by
\(\mathbf 1_B\).

\begin{definition}\label{def:left-quotient}
	Let \(L\subseteq\Sigma^*\) and \(u\in\Sigma^*\). The left quotient, or
	left derivative, of \(L\) by \(u\) is
	\(u^{-1}L
	=
	\{v\in\Sigma^*:uv\in L\}\)
	The family of all left derivatives of \(L\) is
	\[
	\mathcal D(L)
	=
	\{u^{-1}L:u\in\Sigma^*\}.
	\]
\end{definition}

The fundamental finite-state characterization is the Myhill--Nerode
theorem.

\begin{theorem}[Myhill--Nerode, \cite{Myhill1957,Nerode1958,Eilenberg1974,HopcroftUllman1979}]\label{thm:myhill-nerode}
	For a language \(L\subseteq\Sigma^*\), the following are equivalent:
	\begin{enumerate}
		\item \(L\) is regular;
		\item the derivative family \(\mathcal D(L)\) is finite;
		\item the relation
		\[
		u\equiv_L v
		\quad\Longleftrightarrow\quad
		(\forall w\in\Sigma^*)\,
		\bigl(uw\in L\Longleftrightarrow vw\in L\bigr)
		\]
		has finite index.
	\end{enumerate}
	Moreover, the number of equivalence classes equals the number of states
	of the minimal complete deterministic automaton recognizing \(L\).
\end{theorem}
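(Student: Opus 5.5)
The plan is to prove the three equivalences through the right-invariant equivalence \(\equiv_L\), using the identity
\[
u\equiv_L v\iff u^{-1}L=v^{-1}L .
\]
This identity is immediate from Definition~\ref{def:left-quotient}: \(w\in u^{-1}L\) exactly when \(uw\in L\). Hence \([u]\mapsto u^{-1}L\) is a well-defined bijection from the equivalence classes of \(\equiv_L\) onto \(\mathcal D(L)\). This gives (2)\(\Leftrightarrow\)(3) at once, and shows that the index of \(\equiv_L\) equals \(|\mathcal D(L)|\).

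For (1)\(\Rightarrow\)(3), fix a complete deterministic automaton \((Q,\Sigma,\delta,q_0,F)\) recognizing \(L\), and extend \(\delta\) to words in the usual way. Suppose \(\delta(q_0,u)=\delta(q_0,v)\). Then \(\delta(q_0,uw)=\delta(q_0,vw)\) for every \(w\in\Sigma^*\), so \(uw\in L\iff vw\in L\), that is, \(u\equiv_L v\). Thus \(\equiv_L\) is coarser than the kernel of \(u\mapsto\delta(q_0,u)\). Consequently its index is at most the number of reachable states, and in particular at most \(|Q|\).

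For (3)\(\Rightarrow\)(1), build the quotient automaton. Its states are the classes \([u]\), the initial state is \([\varepsilon]\), and the transitions are \(\delta([u],a)=[ua]\). A class \([u]\) is accepting iff \(u\in L\). Both \(\delta\) and the accepting set need a well-definedness check, which rests on two observations:
\begin{itemize}
\item \(\equiv_L\) is a right congruence: if \(u\equiv_L v\), then \(ua\equiv_L va\), by applying the definition with \(w=aw'\);
\item acceptance is class-invariant: taking \(w=\varepsilon\) shows \(u\in L\iff v\in L\) whenever \(u\equiv_L v\).
\end{itemize}
Induction on \(|w|\) then gives \(\delta([\varepsilon],w)=[w]\). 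Hence \(w\) is accepted iff \(w\in L\), and \(L\) is recognized by a finite automaton whose number of states equals the index of \(\equiv_L\).

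For the final claim, combine both directions. Any complete deterministic automaton for \(L\) has at least as many states as the index of \(\equiv_L\), by the counting in (1)\(\Rightarrow\)(3). The quotient automaton attains exactly this number. So the minimal number of states equals the number of classes.

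The main obstacle is the well-definedness in the quotient construction, namely that \(\equiv_L\) is a right congruence compatible with acceptance, since every other step is a direct count. I would also state the minimality count via reachable states, so that the argument does not need uniqueness of the minimal automaton up to isomorphism.
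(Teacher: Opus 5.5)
Your proof is correct. It is the standard argument from the references the paper cites: you identify the classes of \(\equiv_L\) with \(\mathcal D(L)\) via \(u\equiv_L v\iff u^{-1}L=v^{-1}L\), bound the index by the reachable states of any complete deterministic automaton, and build the quotient automaton from the right congruence. The paper itself gives no proof and simply quotes the theorem.

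Two points are left implicit. First, you take ``regular'' to mean ``recognized by a finite complete deterministic automaton''; the paper never defines the term, so this reading is consistent with it. Second, completeness is exactly what makes \(u\mapsto\delta(q_0,u)\) total in your lower bound, which is why your count matches the minimal \emph{complete} automaton, including a possible sink state.
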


\section{Main Results}\label{sec:main-results}

We first state the main geometric result in an abstract form. Its
formulation is independent of formal languages and concerns Banach spaces
generated by functions taking values in a fixed finite set.

\begin{theorem}\label{thm:main-rigidity}
	Let \(\Omega\) be a set, let \(F\subset \mathbb{R}\) be finite, and let
	\(\mathcal A\subset F^\Omega\). Define
	\[
	X_{\mathcal A}
	=
	\overline{\operatorname{span}}^{\,\|\cdot\|_\infty}\mathcal A
	\subset \ell_\infty(\Omega).
	\]
	Then the following assertions are equivalent:
	\begin{enumerate}
		\item \(\mathcal A\) is finite;
		\item \(X_{\mathcal A}\) is finite-dimensional;
		\item \(X_{\mathcal A}\) is superreflexive;
		\item \(X_{\mathcal A}\) admits an equivalent uniformly convex norm.
	\end{enumerate}
	In particular, an infinite family of uniformly bounded finite-valued
	functions cannot generate an infinite-dimensional superreflexive subspace
	of \(\ell_\infty(\Omega)\).
\end{theorem}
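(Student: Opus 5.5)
The implications among (1)–(4) split into easy ones and one substantial one. The implication (1)\(\Rightarrow\)(2) is trivial. (2)\(\Rightarrow\)(3) holds because finite-dimensional spaces are superreflexive: every space finitely representable in a finite-dimensional space has the same finite dimension. (3)\(\Leftrightarrow\)(4) is the classical James–Enflo–Pisier equivalence quoted in the preliminaries.

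For (2)\(\Rightarrow\)(1) I use separation. Let \(\delta>0\) be the minimal gap between distinct elements of \(F\). Distinct members of \(\mathcal A\) differ at some point of \(\Omega\), so \(\|f-g\|_\infty\ge\delta\) for distinct \(f,g\in\mathcal A\). If \(\mathcal A\) were infinite, it would be an infinite \(\delta\)-separated subset of the ball of radius \(\max|F|\) in a finite-dimensional space, contradicting compactness of that ball. The whole weight of the theorem is therefore in (3)\(\Rightarrow\)(1).

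For (3)\(\Rightarrow\)(1), suppose \(\mathcal A\) is infinite and \(X_{\mathcal A}\) is superreflexive, hence reflexive.
\begin{enumerate}
\item By Theorem~\ref{thm:eberlein-smulian}, choose distinct \(f_n\in\mathcal A\) converging weakly to some \(f\). Then \(d_n=f_{2n}-f_{2n+1}\) is weakly null, takes values in the finite set \(F-F\), and satisfies \(\|d_n\|_\infty\ge\delta\).
\item Since \(\|d_n\|_\infty\) lies in the finite set \(\{|t|:t\in F-F\}\), pass to a subsequence of constant norm \(c\). Then \(x_n=d_n/c\) is a normalized weakly null sequence with values in a finite set \(A\ni 0\).
\item For \(k\in\mathbb N\) and \(j_1<\dots<j_k\), the pattern set \(\{(x_{j_1}(\omega),\dots,x_{j_k}(\omega)):\omega\in\Omega\}\subseteq A^k\) is one of finitely many colours. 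By Theorem~\ref{thm:ramsey} for each \(k\), followed by a diagonal argument, pass to a subsequence on which this set equals a fixed \(P_k\) for all \(j_1<\dots<j_k\) taken from the tail beyond the \(k\)-th term.
\item Then the norm of \(\sum a_ix_{j_i}\) is eventually exactly \(\max_{p\in P_k}|\sum a_ip_i|\), and this quantity is the spreading norm. The \(P_k\) are consistent: \(P_{k+1}\) projects onto \(P_k\), and more generally, deleting coordinates maps \(P_{k+1}\) onto \(P_k\).
\item Set \(K=\{s\in A^{\mathbb N}: s|_k\in P_k\ \forall k\}\). This is a closed, hence compact metrizable, subset of \(A^{\mathbb N}\), and every element of \(P_k\) extends to a point of \(K\) by compactness. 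Thus \(b_i\mapsto(s\mapsto s_i)\) extends to a linear isometry of \(E=\overline{\operatorname{span}}\{b_i\}\) onto an infinite-dimensional subspace of \(C(K)\). Infinite-dimensionality holds because \(E\) has a basis, by Theorem~\ref{thm:brunel-sucheston}.
\item By Proposition~\ref{prop:spreading-finite-representability}, \(E\) is finitely representable in \(X_{\mathcal A}\), so \(E\) is reflexive. Since \((b_i)\) is a normalized \(1\)-unconditional basis of a reflexive space, it is weakly null in \(E\). Transporting to \(C(K)\), each point evaluation at \(s\in K\) gives \(s_i\to0\). As \(A\) is finite, every \(s\in K\) therefore has finite support.
\item I claim the supports are uniformly bounded. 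If \(P_k\) contained patterns with \(m\) nonzero entries for every \(m\), then by spreading invariance one can push those nonzero entries to the first \(m\) coordinates: delete the zero coordinates and use the consistency of the \(P_k\). A compactness (König-type) argument on \(A^{\mathbb N}\) would then produce a point of \(K\) with infinitely many nonzero coordinates, contradicting step 6.
\item With supports bounded by some \(m\), \(K\) is contained in the countable set of finitely supported sequences in \(A^{\mathbb N}\). Theorem~\ref{thm:pelczynski-semadeni} then says that the infinite-dimensional subspace \(E\subseteq C(K)\) is nonreflexive, which is the desired contradiction.
\end{enumerate}

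The main obstacle is the stabilization bookkeeping in steps 3–5 and 7. Step 5 needs the diagonal Ramsey argument to deliver \emph{exact} norm equality and the consistency of the \(P_k\) under deletion of coordinates. Step 7 needs that consistency to transfer "many nonzero entries" into a single point of \(K\) with infinite support. I would first try to prove the deletion-consistency directly from the fact that the same \(P_k\) serves all increasing \(k\)-tuples in the tail, and derive both steps from it.
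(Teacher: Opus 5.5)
Your proposal is correct and follows essentially the paper's proof. You use Theorem~\ref{thm:eberlein-smulian} to get a normalized weakly null sequence with values in a finite set, then Ramsey stabilization of the pattern sets to realize the spreading model isometrically inside \(C(K)\) for a compact \(K\subseteq A^{\mathbb N}\). Points of \(K\) with infinite support are excluded by weak nullity of \((b_i)\); this is the paper's Case~1, run in contrapositive using the reflexivity of \(E\) obtained from finite representability. Theorem~\ref{thm:pelczynski-semadeni} then handles countable \(K\), as in the paper's Case~2. Using \(f_{2n}-f_{2n+1}\) instead of \(f_n-f\) is only a cosmetic change that spares you Lemma~\ref{lem:weak-limit-finite-valued}.

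Your step~7 is correct but unnecessary. Once every point of \(K\) has finite support, \(K\) already lies in the countable set of finitely supported sequences in \(A^{\mathbb N}\), so no uniform bound on supports is needed.
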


As a first consequence, the preceding rigidity theorem gives a complete
geometric characterization of regular languages.

\begin{corollary}\label{cor:main-language-rigidity}
	Let \(L\subseteq\Sigma^*\) be a language over a finite alphabet, and let
	\[
	X_L
	=
	\overline{\operatorname{span}}
	\{\mathbf 1_{w^{-1}L}:w\in\Sigma^*\}
	\subset \ell_\infty(\Sigma^*).
	\]
	Then the following assertions are equivalent:
	\begin{enumerate}
		\item \(L\) is regular;
		\item \(X_L\) is finite-dimensional;
		\item \(X_L\) is superreflexive;
		\item \(X_L\) admits an equivalent uniformly convex norm.
	\end{enumerate}
\end{corollary}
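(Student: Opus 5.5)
The statement to prove is Corollary~\ref{cor:main-language-rigidity}. The plan is to apply Theorem~\ref{thm:main-rigidity} with \(\Omega=\Sigma^*\), \(F=\{0,1\}\) and
\[
\mathcal A=\mathcal A_L=\{\mathbf 1_{w^{-1}L}:w\in\Sigma^*\}\subset\{0,1\}^{\Sigma^*}.
\]
By definition, \(X_{\mathcal A_L}=X_L\). Theorem~\ref{thm:main-rigidity} then gives directly the equivalence of the following: \(\mathcal A_L\) is finite; \(X_L\) is finite-dimensional; \(X_L\) is superreflexive; \(X_L\) admits an equivalent uniformly convex norm. This settles assertions (2)--(4) among themselves. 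It remains only to connect assertion (1) with the finiteness of \(\mathcal A_L\).

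For that link I use Theorem~\ref{thm:myhill-nerode}. \(L\) is regular if and only if \(\mathcal D(L)=\{w^{-1}L:w\in\Sigma^*\}\) is finite. The map \(B\mapsto\mathbf 1_B\) is a bijection from \(\mathcal D(L)\) onto \(\mathcal A_L\), since distinct subsets of \(\Sigma^*\) have distinct characteristic functions. Hence \(\mathcal D(L)\) is finite exactly when \(\mathcal A_L\) is finite, and therefore (1) is equivalent to (2), (3) and (4).

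One can also see (1)\(\Leftrightarrow\)(2) directly, without invoking the deep part of the theorem. If \(\mathcal A_L\) is finite, its span is finite-dimensional and hence closed. Conversely, distinct elements of \(\mathcal A_L\) are at sup-distance exactly \(1\). An infinite \(1\)-separated subset of the unit ball cannot lie in a finite-dimensional space, whose unit ball is compact. I would include this remark to show that the nontrivial content of the corollary is concentrated in the implication (3)\(\Rightarrow\)(1).

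There is no real obstacle here: the corollary is a direct specialization. All the substantive work, namely the spreading-model obstruction showing that an infinite finite-valued family cannot span a superreflexive space, is contained in Theorem~\ref{thm:main-rigidity}. The only points needing care are that the theorem's hypothesis \(\mathcal A\subset F^\Omega\) with \(F\) finite is satisfied, which it is with \(F=\{0,1\}\), and the bookkeeping identification of derivatives with their characteristic functions.
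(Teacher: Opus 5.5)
Your proposal is correct and follows the paper's own proof. Like the paper, you set \(\mathcal A_L=\{\mathbf 1_{w^{-1}L}:w\in\Sigma^*\}\subset\{0,1\}^{\Sigma^*}\), apply Theorem~\ref{thm:main-rigidity} with \(\Omega=\Sigma^*\) and \(F=\{0,1\}\), and use Theorem~\ref{thm:myhill-nerode} together with the injectivity of \(B\mapsto\mathbf 1_B\) to identify regularity with finiteness of \(\mathcal A_L\). Your side remark on the direct \(1\)-separation argument for (1)\(\Leftrightarrow\)(2) is the content of Lemma~\ref{lem:finite-valued-separated} and Corollary~\ref{cor:finite-dimensional-family}.
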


\section{Auxiliary Results}\label{sec:auxiliary-results}

Throughout the paper, \(x_n\rightharpoonup x\) denotes that the sequence
\(\{x_n\}\) converges weakly to \(x\).

\begin{lemma}\label{lem:finite-valued-separated}
	Let \(F\subset\mathbb{R}\) be finite and let
	\(\mathcal A\subset F^\Omega\). If \(\mathcal A\) is infinite, then
	\(\mathcal A\), regarded as a subset of \(\ell_\infty(\Omega)\), is not
	relatively compact.
\end{lemma}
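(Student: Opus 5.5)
The plan is to show that \(\mathcal A\) is uniformly separated in the supremum norm. An infinite uniformly separated set cannot be totally bounded, and so it cannot be relatively compact in the Banach space \(\ell_\infty(\Omega)\).

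First, set
\[
\delta=\min\{|s-t|: s,t\in F,\ s\neq t\}.
\]
This minimum is strictly positive because \(F\) is finite. If \(F\) has at most one element, then \(F^\Omega\) has at most one element, so \(\mathcal A\) is finite. Hence in the infinite case we may assume \(|F|\ge 2\), and \(\delta\) is well defined.

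Second, take distinct \(f,g\in\mathcal A\). There is some \(\omega\in\Omega\) with \(f(\omega)\neq g(\omega)\). Both values lie in \(F\), so \(|f(\omega)-g(\omega)|\ge\delta\), and therefore \(\|f-g\|_\infty\ge\delta\). Thus \(\mathcal A\) is \(\delta\)-separated.

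Third, because \(\mathcal A\) is infinite, we can choose a sequence \(\{f_n\}_{n=1}^\infty\) of pairwise distinct elements of \(\mathcal A\). Every pair of its terms is at distance at least \(\delta\), so no subsequence is Cauchy and no subsequence converges. In the metric space \(\ell_\infty(\Omega)\), relative compactness is equivalent to every sequence having a convergent subsequence (its closure would be sequentially compact). Hence \(\mathcal A\) is not relatively compact.

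Equivalently, one can argue via total boundedness: a finite \(\delta/2\)-net for \(\mathcal A\) cannot exist, since each ball of radius \(\delta/2\) contains at most one element of \(\mathcal A\). There is no real obstacle in this argument. The only point needing care is the degenerate case \(|F|\le1\), which the first step handles.
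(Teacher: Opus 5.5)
Your proof is correct and matches the paper's argument: the minimal gap $\delta$ between distinct points of $F$ makes $\mathcal A$ an infinite $\delta$-separated subset of $\ell_\infty(\Omega)$, so it cannot be relatively compact. You also handle the degenerate case $|F|\le 1$ explicitly and justify the last step via sequential compactness, both of which the paper leaves implicit.
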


\begin{proof}
	Since \(F\) is finite, the minimum 
	\(\delta_F
	=
	\min\{|a-b|:a,b\in F,\ a\neq b\}>0\) exists,
	provided that \(F\) contains at least two elements. If \(f,g\in\mathcal A\)
	are distinct, then \(f(\omega)\neq g(\omega)\) for some
	\(\omega\in\Omega\), and therefore
	\(\|f-g\|_\infty\geq\delta_F\).
	
	Thus \(\mathcal A\) is an infinite \(\delta_F\)-separated subset of
	\(\ell_\infty(\Omega)\), and hence it is not relatively compact.
\end{proof}

\begin{corollary}\label{cor:finite-dimensional-family}
	If
	\(X_{\mathcal A}
	=
	\overline{\operatorname{span}}\mathcal A\)
	is finite-dimensional, then \(\mathcal A\) is finite.
\end{corollary}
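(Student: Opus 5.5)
The corollary follows directly from Lemma~\ref{lem:finite-valued-separated}, via the fact that closed bounded subsets of a finite-dimensional normed space are compact. Suppose \(X_{\mathcal A}\) is finite-dimensional. Since \(\mathcal A\subseteq F^\Omega\) and \(F\) is finite, every \(f\in\mathcal A\) satisfies \(\|f\|_\infty\le \max_{a\in F}|a|\). Hence \(\mathcal A\) is a bounded subset of the finite-dimensional Banach space \(X_{\mathcal A}\). By the Heine--Borel theorem in finite-dimensional normed spaces, its closure in \(X_{\mathcal A}\) is compact. Because \(X_{\mathcal A}\) carries the norm of \(\ell_\infty(\Omega)\), this closure is also compact in \(\ell_\infty(\Omega)\), so \(\mathcal A\) is relatively compact there.

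If \(\mathcal A\) were infinite, Lemma~\ref{lem:finite-valued-separated} would say that \(\mathcal A\) is not relatively compact in \(\ell_\infty(\Omega)\), contradicting the previous step. Therefore \(\mathcal A\) is finite.

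The only point needing care is the degenerate case in the proof of the lemma, where \(F\) has at most one element and \(\delta_F\) is undefined. In that case \(F^\Omega\) contains at most one function, so \(\mathcal A\) is automatically finite and nothing remains to prove. I expect no real obstacle: the argument reduces to boundedness combined with local compactness of finite-dimensional spaces.
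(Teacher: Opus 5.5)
Your proof is correct and follows the paper's argument: \(\mathcal A\) is bounded in the finite-dimensional space \(X_{\mathcal A}\), hence relatively compact, which contradicts Lemma~\ref{lem:finite-valued-separated} if \(\mathcal A\) is infinite. Your treatment of the degenerate case \(|F|\le 1\) is a useful addition, since the paper's proof of that lemma only covers \(F\) with at least two elements.
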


\begin{proof}
	Suppose contrary, i.e., \(\mathcal A\) is infinite. Since
	\(\mathcal A\subset F^\Omega\), every \(f\in\mathcal A\) satisfies
	\[
	\|f\|_\infty
	\leq
	\max\{|a|:a\in F\}.
	\]
	
	Hence \(\mathcal A\) is a bounded subset of \(X_{\mathcal A}\).
	
	Since \(X_{\mathcal A}\) is finite-dimensional, every bounded subset of
	\(X_{\mathcal A}\) is relatively compact. Thus \(\mathcal A\) is
	relatively compact in \(X_{\mathcal A}\), and therefore also in
	\(\ell_\infty(\Omega)\).
	
	This contradicts Lemma~\ref{lem:finite-valued-separated}, which states
	that an infinite subset of \(F^\Omega\) is not relatively compact.
	Consequently, \(\mathcal A\) must be finite.
\end{proof}

\begin{lemma}\label{lem:weak-limit-finite-valued}
	Let \(F\subset\mathbb R\) be finite and let
	\((f_n)\subset F^\Omega\) converge weakly in
	\(\ell_\infty(\Omega)\) to \(f\). Then
	\(f\in F^\Omega\).
\end{lemma}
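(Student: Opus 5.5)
The plan is to test weak convergence against the point-evaluation functionals. For each \(\omega\in\Omega\), the map \(\delta_\omega:\ell_\infty(\Omega)\to\mathbb R\), \(\delta_\omega(g)=g(\omega)\), is linear and satisfies \(|\delta_\omega(g)|\le\|g\|_\infty\). It therefore belongs to \(\ell_\infty(\Omega)^*\). Since \(f_n\rightharpoonup f\), applying \(\delta_\omega\) gives \(f_n(\omega)\to f(\omega)\) in \(\mathbb R\) for every fixed \(\omega\). In other words, weak convergence implies pointwise convergence.

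The second step uses the finiteness of \(F\). A finite subset of \(\mathbb R\) is closed, so a convergent sequence of real numbers taking values in \(F\) has its limit in \(F\). Alternatively, one can argue with the separation constant \(\delta_F\) from Lemma~\ref{lem:finite-valued-separated}: the sequence \((f_n(\omega))_n\) is Cauchy, so for large \(m,n\) we have \(|f_n(\omega)-f_m(\omega)|<\delta_F\). This forces \(f_n(\omega)=f_m(\omega)\), so the sequence is eventually constant, and its limit \(f(\omega)\) is that constant value in \(F\). Since \(\omega\) was arbitrary, \(f\in F^\Omega\).

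There is no genuine obstacle here. The only points needing care are the trivial case where \(F\) has at most one element, in which \(\delta_F\) is undefined but closedness of \(F\) still suffices, and the observation that point evaluations are bounded functionals. The argument uses no structure of \(\ell_\infty(\Omega)^*\) beyond these evaluations.
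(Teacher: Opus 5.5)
Your proof is correct and matches the paper's argument: you apply the bounded point evaluations \(\delta_\omega\) to get \(f_n(\omega)\to f(\omega)\), and then use that the finite set \(F\) is closed in \(\mathbb R\). Your alternative eventual-constancy argument via \(\delta_F\) is also valid, but it is not needed.
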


\begin{proof}
	Fix \(\omega\in\Omega\). The evaluation functional
	\(\delta_\omega:\ell_\infty(\Omega)\to\mathbb R\), defined by
	\(\delta_\omega(g)=g(\omega)\),
	is bounded and linear. Hence the weak convergence
	\(f_n\rightharpoonup f\) implies
	\(f_n(\omega)
	=
	\delta_\omega(f_n)
	\longrightarrow
	\delta_\omega(f)
	=
	f(\omega)\).
	
	Since \(f_n(\omega)\in F\) for every \(n\), and \(F\) is finite and
	therefore closed in \(\mathbb R\), it follows that
	\(f(\omega)\in F\).
	
	As \(\omega\in\Omega\) was arbitrary, we conclude that
	\(f\in F^\Omega\).
\end{proof}

For subsets \(A,B\) of a vector space, we use the notation
\(A-B:=\{a-b:a\in A,\ b\in B\}\)
for their algebraic difference.

\begin{lemma}\label{lem:normalized-difference-sequence}
	Let \(F\subset\mathbb R\) be finite and let
	\((f_n)\subset F^\Omega\) be a sequence of pairwise distinct functions
	converging weakly to \(f\). Then, after passing to a subsequence, there
	exists \(c>0\) such that
	\(x_n=\frac{f_n-f}{c}\) for \(n\in\mathbb{N}\)
	is a normalized weakly null sequence whose elements take values in a fixed
	finite subset of \(\mathbb R\).
\end{lemma}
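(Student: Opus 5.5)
The plan is to normalize the differences \(f_n-f\) by a constant that does not depend on \(n\). Since \(f_n\rightharpoonup f\), Lemma~\ref{lem:weak-limit-finite-valued} gives \(f\in F^\Omega\). Hence every difference \(f_n-f\) takes values in the finite set \(F-F\), and \(f_n-f\rightharpoonup 0\) by linearity of weak limits.

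Next we show that the norms \(\|f_n-f\|_\infty\) lie in a finite set bounded away from zero. The functions \(f_n\) are pairwise distinct, so \(f_n=f\) holds for at most one index. After discarding that index we have \(f_n\neq f\) for all \(n\). Then \(f_n-f\) is a nonzero function with values in \(F-F\), so its supremum norm is attained and belongs to the finite set
\[
N_F=\{|a-b|:a,b\in F,\ a\neq b\}\subset(0,\infty).
\]

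By the pigeonhole principle, some value \(c\in N_F\) satisfies \(\|f_n-f\|_\infty=c\) for infinitely many \(n\). We pass to this subsequence. It still converges weakly to \(f\), because subsequences of weakly convergent sequences converge to the same limit.

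Finally, set \(x_n=(f_n-f)/c\). Then \(\|x_n\|_\infty=1\) for every \(n\), the sequence \(x_n\rightharpoonup 0\), and each \(x_n\) takes values in the fixed finite set \(c^{-1}(F-F)\). This is exactly the assertion.

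The only point that needs care, and the main obstacle, is getting a single normalizing constant \(c\). Dividing each term by its own norm would not keep the values in a common finite set if there were infinitely many different norms. The finiteness of \(N_F\) together with the pigeonhole argument removes this difficulty.
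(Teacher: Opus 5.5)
Your proposal is correct and follows the paper's proof essentially step for step. You use Lemma~\ref{lem:weak-limit-finite-valued} to get \(f\in F^\Omega\), discard the at most one index with \(f_n=f\), note that the norms \(\|f_n-f\|_\infty\) lie in the finite set of positive values \(|a-b|\) with \(a\neq b\) in \(F\), and pass to a subsequence on which the norm equals a single constant \(c\), so that \(x_n=(f_n-f)/c\) is normalized, weakly null, and takes values in \(c^{-1}(F-F)\), exactly as in the paper.
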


\begin{proof}
	By Lemma~\ref{lem:weak-limit-finite-valued}, the weak limit \(f\)
	belongs to \(F^\Omega\). Hence, for every \(n\) and every
	\(\omega\in\Omega\), the inclusion
	\(f_n(\omega)-f(\omega)\in F-F\) holds,
	where
	\(F-F=\{a-b:a,b\in F\}\)
	is a finite subset of \(\mathbb R\).
	
	Since the functions \(f_n\) are pairwise distinct, at most one of them
	can coincide with \(f\). Discarding this term if necessary, we may assume
	that
	\(f_n-f\neq 0\) for every \(n\in\mathbb{N}\).
	
	For each \(n\), there holds
	\(\|f_n-f\|_\infty
	=
	\sup_{\omega\in\Omega}|f_n(\omega)-f(\omega)|\).
	
	As the values \(f_n(\omega)-f(\omega)\) belong to the finite set
	\(F-F\), this supremum is attained as one of the finitely many positive
	values in
	\(\{|a-b|:a,b\in F,\ a\neq b\}\).
	
	Consequently, the sequence
	\(\bigl\{\|f_n-f\|_\infty\bigr\}_{n=1}^\infty\)
	takes only finitely many positive values. 
	
	Therefore, after passing to an
	infinite subsequence \(\{f_{n_k}\}_{k=1}^\infty\), which just for simplicity of the notation we will denote by \(\{f_n\}\), there exists \(c>0\) such that
	\(\|f_n-f\|_\infty=c\)
	for every \(n\) in this subsequence.
	
	Define
	\(x_n=\frac{f_n-f}{c}\).
	Then
	\(\|x_n\|_\infty=1\),
	so \((x_n)\) is normalized. 
	
	Moreover, since
	\(f_n\rightharpoonup f\), we have
	\(_n\rightharpoonup 0\),
	and hence \((x_n)\) is weakly null.
	
	Finally, for every \(n\) and \(\omega\in\Omega\),
	\(x_n(\omega)\in
A=\frac{1}{c}(F-F)\),
	where \(A\subset\mathbb R\) is finite and \(0\in A\). Thus
	\((x_n)\) is a normalized weakly null sequence whose elements take
	values in the fixed finite set \(A\).
\end{proof}

A basic sequence \((b_n)\) is called
suppression-\(1\)-unconditional if, for every finitely supported scalar
sequence \((a_n)\) and every set \(A\subseteq\mathbb N\),
\[
\left\|\sum_{n\in A}a_nb_n\right\|
\leq
\left\|\sum_{n=1}^{\infty}a_nb_n\right\|.
\]

A spreading model generated by a normalized weakly null sequence is a
suppression-\(1\)-unconditional basic sequence.

\begin{lemma}\label{lem:finite-range-spreading}
	Let \(A\subset\mathbb R\) be finite with \(0\in A\), and let
	\(X\subset\ell_\infty(\Omega)\) be a closed subspace. If
	\(\{x_n\}_{n=1}^\infty\subset X\) is a normalized weakly null sequence, so that
	\(x_n(\omega)\in A\) for \((n,\omega)\in\mathbb N\times\Omega\),
	then there is a subsequence \(\{x_{n_k}\}_{k=1}^\infty\) which generates a spreading model whose
	closed linear span is nonreflexive.
\end{lemma}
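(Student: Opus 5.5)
The plan is to exploit the fact that the sup-norm of a finite combination of \(A\)-valued functions is governed entirely by which \emph{value patterns} occur. For indices \(j_1<\cdots<j_k\), set
\[
P(j_1,\ldots,j_k)=\bigl\{(x_{j_1}(\omega),\ldots,x_{j_k}(\omega)):\omega\in\Omega\bigr\}\subseteq A^k,
\]
so that
\[
\Bigl\|\sum_{i=1}^k a_ix_{j_i}\Bigr\|_\infty=\max_{p\in P(j_1,\ldots,j_k)}\Bigl|\sum_{i=1}^k a_ip_i\Bigr|.
\]
The maximum is attained because \(P(j_1,\ldots,j_k)\) is finite. Since \(A^k\) has only finitely many subsets, the map \(\{j_1<\cdots<j_k\}\mapsto P(j_1,\ldots,j_k)\) is a finite colouring of \(k\)-element subsets. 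I will apply Theorem~\ref{thm:ramsey} successively for \(k=1,2,\ldots\) and then diagonalize. This yields a subsequence \((x_{n_j})\) and sets \(P_k\subseteq A^k\) such that \(P(n_{j_1},\ldots,n_{j_k})=P_k\) whenever \(k\le j_1<\cdots<j_k\). By Theorem~\ref{thm:brunel-sucheston}, passing to a further subsequence preserves this, and I may assume the subsequence generates a normalized, suppression-\(1\)-unconditional basic spreading model \((b_i)\). The spreading norm is then given exactly by
\[
\Bigl\|\sum_{i=1}^k a_ib_i\Bigr\|=\max_{p\in P_k}\Bigl|\sum_{i=1}^k a_ip_i\Bigr|,
\]
since the approximating norms are eventually constant.

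Next I record two compatibility properties of the family \((P_k)\), both read off from the tail.
\begin{enumerate}
\item \emph{Restriction.} The projection of \(P_{k+1}\) onto its first \(k\) coordinates equals \(P_k\), because the pattern of \(j_1<\cdots<j_k\) is the projection of the pattern of \(j_1<\cdots<j_{k+1}\).
\item \emph{Spreading.} For \(i_1<\cdots<i_m\le k\), the projection of \(P_k\) onto the coordinates \(i_1,\ldots,i_m\) equals \(P_m\).
\end{enumerate}
Define
\[
K=\{s\in A^{\mathbb N}:(s_1,\ldots,s_k)\in P_k\text{ for all }k\}.
\]
This is a closed, hence compact metrizable, subset of \(A^{\mathbb N}\) by Theorem~\ref{thm:tychonoff}. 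By the restriction property and a König-lemma (compactness) argument, every \(p\in P_k\) extends to some \(s\in K\). Hence
\[
\Bigl\|\sum_{i} a_ib_i\Bigr\|=\max_{s\in K}\Bigl|\sum_{i} a_is_i\Bigr|,
\]
and the map \(b_i\mapsto\pi_i\) (the \(i\)-th coordinate function on \(K\)) extends to a linear isometry of \(E=\overline{\operatorname{span}}\{b_i\}\) into \(C(K)\). Moreover \(E\) is infinite-dimensional, since \((b_i)\) is basic.

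The main obstacle is proving that \(K\) is countable. This is where weak nullity must enter; it is used as follows.
\begin{enumerate}
\item \emph{Reduction to a constant pattern.} Suppose some \(s\in K\) has infinitely many nonzero coordinates. By pigeonhole, there are a fixed \(a\in A\setminus\{0\}\) and positions \(i_1<i_2<\cdots\) with \(s_{i_r}=a\).
\item \emph{Constant patterns lie in every \(P_m\).} By the spreading property, the constant tuple \((a,\ldots,a)\in A^m\) lies in \(P_m\) for every \(m\).
\item \emph{Realizing long constant blocks.} Fix a large \(N\). For each \(m\), the stabilization on the tail gives some \(\omega_m\in\Omega\) with \(x_{n_{N+i}}(\omega_m)=a\) for all \(1\le i\le m\).
\item \emph{Contradiction via a weak\(^*\) cluster point.} Let \(\varphi\in\ell_\infty(\Omega)^*\) be a weak\(^*\) cluster point of the evaluations \(\delta_{\omega_m}\). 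Then \(\varphi(x_{n_{N+i}})=a\ne0\) for all \(i\), which contradicts \(x_n\rightharpoonup0\).
\end{enumerate}
Hence every element of \(K\) is finitely supported. Since \(A\) is finite, this means \(K\) is contained in the countable set of finitely supported sequences in \(A^{\mathbb N}\), so \(K\) is countable.

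Finally, \(E\) is isometric to an infinite-dimensional closed subspace of \(C(K)\) with \(K\) countable and compact. By Theorem~\ref{thm:pelczynski-semadeni}, \(E\) contains \(c_0\) and is therefore nonreflexive. I expect the delicate points to be making the diagonal Ramsey stabilization compatible with the spreading-model limit, and the König-type extension step; the countability argument above is the conceptual heart of the proof.
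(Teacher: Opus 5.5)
\textbf{There is a genuine gap in your countability step, and the claim it is meant to establish is false.} Your diagonal Ramsey argument gives $P(n_{j_1},\ldots,n_{j_k})=P_k$ only for $k\le j_1$, a threshold that depends on $k$. In your step ``Realizing long constant blocks'' you fix $N$ and use the stabilization on the blocks $(N+1,\ldots,N+m)$ for arbitrary $m$. This is legitimate only for $m\le N+1$. So you never obtain one infinite set of indices on which a single weak$^*$ cluster point equals $a$.

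No choice of subsequence repairs this. Let $\Omega=\mathcal S=\{F\subset\mathbb N: F\text{ finite},\ |F|\le\min F\}\cup\{\emptyset\}$ be the Schreier family, and let $x_n(F)=\mathbf 1_F(n)$. Then $\|x_n\|_\infty=1$. Since $\mathcal S$ is a countable compact subset of $\{0,1\}^{\mathbb N}$ on which $x_n\to0$ pointwise, $(x_n)$ is weakly null in $C(\mathcal S)$, hence in $\ell_\infty(\mathcal S)$. Yet for $k\le j_1<\cdots<j_k$, every subset $G\subseteq\{j_1,\ldots,j_k\}$ satisfies $|G|\le k\le\min G$. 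Hence $P_k=\{0,1\}^k$ for every subsequence, $K=\{0,1\}^{\mathbb N}$ contains $(1,1,1,\ldots)$, and the spreading model is equivalent to the unit vector basis of $\ell_1$. Thus weak nullity of $(x_n)$ cannot exclude infinitely supported points of $K$, and $K$ need not be countable.

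\textbf{The missing idea is to handle that case separately rather than exclude it, which is what the paper does.} Suppose $s\in K$ and $s_{i_r}=a\neq0$ for all $r\in\mathbb N$. Let $T:E\to C(K)$ be your isometry and put $\varphi=\delta_s\circ T\in E^*$, so that $\varphi(b_{i_r})=a$ for every $r$. If $E$ were reflexive, Theorem~\ref{thm:eberlein-smulian} would give a weakly convergent subsequence of $(b_{i_r})$. Its limit is $0$, because each biorthogonal functional $b_n^*$ vanishes on all but at most one term. This contradicts $\varphi(b_{i_r})=a$. Hence $E$ is nonreflexive in this case, which is exactly the situation of the Schreier example.

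In the complementary case every point of $K$ is finitely supported, so $K$ is countable, and your appeal to Theorem~\ref{thm:pelczynski-semadeni} finishes the proof. The rest of your outline agrees with the paper's proof: the pattern norms, the restriction compatibility, the extension of patterns to points of $K$, and the isometry $b_i\mapsto\pi_i$.
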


\begin{proof}
	According to Theorem \ref{thm:brunel-sucheston}, after passing to a subsequence and just to simplify the notations denoting it by \(\{x_n\}_{n=1}^\infty\),
	we may assume that \(\{x_n\}_{n=1}^\infty\) generates a normalized spreading model
	\(\{b_n\}_{n=1}^\infty\). Since the subsequence \(\{x_n\}_{n=1}^\infty\) is weakly null, \(\{b_n\}\) is a
	suppression-\(1\)-unconditional basic sequence.
	
	For \(k\in\mathbb N\) and \(n_1<\cdots<n_k\), define the pattern set
	\[
	P(n_1,\ldots,n_k)
	=
	\left\{
	\bigl(x_{n_1}(\omega),\ldots,x_{n_k}(\omega)\bigr):
	\omega\in\Omega
	\right\}
	\subset A^k.
	\]
	
	Since \(A\) is finite, there are at most
	\(2^{|A|^k}\)
	possible pattern sets for a fixed \(k\). According to Theorem \ref{thm:ramsey},
	and applying a diagonal selection, we may pass to a further subsequence
	so that, for every \(k\), the pattern set stabilizes to a set
	\(P_k\subset A^k\), i.e., \(P(n_{j_1},\ldots,n_{j_k})=P_k\), 
	whenever \(j_1<\cdots<j_k\) are sufficiently large. Passing to a
	further subsequence does not change the spreading model.
	
	Consequently, for every \(k\in\mathbb N\) and every scalars
	\(a_1,\ldots,a_k\),
	\begin{equation}\label{eq:spreading-pattern-norm}
	\left\|\sum_{i=1}^k a_i b_i\right\|
	=
	\max_{(\alpha_1,\ldots,\alpha_k)\in P_k}
	\left|
	\sum_{i=1}^k a_i\alpha_i
	\right|.
\end{equation}
	
	The family \((P_k)\) is compatible under coordinate restriction.
	Define
	\[
	K
	=
	\left\{
	\eta=(\eta_i)_{i=1}^\infty\in A^{\mathbb N}:
	(\eta_1,\ldots,\eta_k)\in P_k
	\text{ for every }k\in\mathbb N
	\right\}.
	\]
	
	Since \(A\) is finite and discrete, the set
	\(\bigl\{\eta\in A^{\mathbb N}:
	(\eta_1,\ldots,\eta_k)\in P_k\bigr\}\)
	is  is both open and
	closed subset of \(A^{\mathbb N}\). Hence \(K\) is closed
	in the compact space \(A^{\mathbb N}\), and therefore \(K\) is compact.
	Moreover, compatibility of the sets \(P_k\) implies that every element
	of \(P_k\) occurs as the first \(k\) coordinates of some element of
	\(K\).
	
	For \(i\in\mathbb N\), let the functional
	\(p_i:K\to\mathbb R\) be defined by \(p_i(\eta)=\eta_i\).
	
	It follows from \eqref{eq:spreading-pattern-norm} that
	\[
	\left\|\sum_{i=1}^k a_i b_i\right\|
	=
	\left\|\sum_{i=1}^k a_i p_i\right\|_{C(K)}.
	\]
	
	Thus the correspondence
	\(b_i\longmapsto p_i\)
	extends to an isometric embedding
	\(T:E\longrightarrow C(K)\), where \(E=\overline{\operatorname{span}}\{b_i:i\in\mathbb N\}\).
	
	We now distinguish two cases: 1) \(K\) contains an element
	\(\eta=\{\eta_i\}\) with infinite support and 2) every \(\eta\in K\) has finite support.
	
	Case 1)
	Suppose that \(K\) contains an element
	\(\eta=\{\eta_i\}\) with infinite support. Since \(A\) is finite, there
	exist \(\alpha\in A\setminus\{0\}\) and an increasing sequence
	\(i_j\) of naturals such that
	\(\eta_{i_j}=\alpha\) for every \(j\).
	
	Suppose, towards a contradiction, that \(E\) is reflexive. Then the
	bounded sequence \(\{b_{i_j}\}\) has a weakly convergent subsequence,
	say
	\(b_{i_{j_\ell}}\rightharpoonup x\in E\).
	
	Let \(b_n^*\) denote the biorthogonal functionals associated with the
	basic sequence \(\{b_n\}\). For every fixed \(n\) and for all sufficiently large \(\ell\) there holds
	\(b_n^*(b_{i_{j_\ell}})=0\).
	
	Hence
	\(b_n^*(x)=0\) for every \(n\), and therefore \(x=0\).
	
	On the other hand, evaluation at \(\eta\) defines a bounded linear
	functional on \(C(K)\). Hence
	\(\varphi_\eta
	=
	\delta_\eta\circ T
	\in E^*\).
	
	For every \(\ell\) there holds
	\(\varphi_\eta(b_{i_{j_\ell}})
	=
	p_{i_{j_\ell}}(\eta)
	=
	\eta_{i_{j_\ell}}
	=
	\alpha\neq0\), which contradicts with the weakly null convergence
	\(b_{i_{j_\ell}}\rightharpoonup0\). Thus \(E\) is nonreflexive.
	
	Case 2)
	Suppose that every \(\eta\in K\) has finite support. Since \(A\) is
	finite, the set of all finitely supported sequences in
	\(A^{\mathbb N}\) is countable. Hence \(K\) is a countable compact
	space.
	
	The space \(E\) is infinite-dimensional because \(\{b_n\}\) is a basic
	sequence. Consequently, \(K\) cannot be finite, since otherwise
	\(C(K)\) would be finite-dimensional. Moreover, \(T(E)\) is a closed
	infinite-dimensional subspace of \(C(K)\). By
	Theorem~\ref{thm:pelczynski-semadeni}, \(T(E)\) contains a subspace
	isomorphic to \(c_0\). Hence \(T(E)\), and therefore \(E\), is
	nonreflexive.
	
	In both cases the closed linear span of the spreading model
	\(\{b_n\}\) is nonreflexive, which completes the proof.
\end{proof}

\begin{proposition}\label{prop:spreading-finite-representability}
	Let \(\{x_n\}_{n=1}^\infty\) be a bounded sequence in a Banach space \(X\), and let
	\(\{x_{m_j}\}_{k=1}^\infty\) generates a basic spreading model \(\{b_n\}\). Then
	\(E=\overline{\operatorname{span}}\{b_n:n\in\mathbb N\}\)
	is finitely representable in \(X\).
\end{proposition}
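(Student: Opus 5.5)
The plan is to reproduce the finite-representability argument given for the earlier version of this proposition in the preliminaries, taking a little more care over uniformity. Fix a finite-dimensional subspace \(G\subseteq E\) and \(\varepsilon>0\), and choose \(\delta>0\) with \((1+\delta)^2<1+\varepsilon\). The isomorphism \(G\to X\) will be built as a composition of two maps. First, a truncation \(P_m|_G\) into \(E_m=\operatorname{span}\{b_1,\ldots,b_m\}\). Second, a map \(U_m:E_m\to X\) sending \(b_i\) to \(x_{m_{j_i}}\) for suitably large indices \(j_1<\cdots<j_m\).

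\textbf{Step 1 (truncation).} Since \(\{b_n\}\) is a Schauder basis of \(E\), the basis projections \(P_m\) converge pointwise to the identity. On the finite-dimensional space \(G\), pointwise convergence of the uniformly bounded operators \(P_m\) upgrades to uniform convergence on the compact unit sphere \(S_G\). Hence for large \(m\),
\[
\sup_{g\in S_G}\|P_mg-g\|<\eta,
\]
where \(\eta\) is chosen so that \(1-\eta\le\|P_mg\|\le 1+\eta\) on \(S_G\). This makes \(P_m|_G\) injective with \(\|P_m|_G\|\,\|(P_m|_G)^{-1}\|\le (1+\eta)/(1-\eta)<1+\delta\).

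\textbf{Step 2 (realizing \(E_m\) in \(X\)).} This is the main step. The spreading-model definition only gives convergence \(\|\sum a_ix_{m_{j_i}}\|\to\|\sum a_ib_i\|\) for each fixed coefficient vector, so uniformity has to be manufactured.
\begin{itemize}
\item Since \(\{b_i\}_{i\le m}\) is linearly independent, the unit sphere \(S_{E_m}\) is compact, and its coefficient vectors range over a bounded subset of \(\mathbb R^m\), say \(|a_i|\le C\).
\item Since the \(x_n\) are bounded by some \(M\), the map \((a_i)\mapsto\|\sum a_ix_{m_{j_i}}\|\) is \(mM\)-Lipschitz in the \(\ell_\infty\)-norm on coefficients, uniformly in the indices. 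The map \((a_i)\mapsto\|\sum a_ib_i\|\) is likewise Lipschitz.
\item Take a finite \(\theta\)-net of coefficient vectors for \(S_{E_m}\). Apply the spreading-model property at each net point to find a single \(N\) such that, for all \(N\le j_1<\cdots<j_m\), the two norms differ by less than \(\theta\) at every net point.
\item The Lipschitz bounds then give \(\bigl|\|\sum a_ix_{m_{j_i}}\|-\|\sum a_ib_i\|\bigr|<\theta(1+2mM')\) for all of \(S_{E_m}\), with \(M'\) an appropriate constant.
\end{itemize}
Taking \(\theta\) small forces \(\|U_me\|\in(1-\delta',1+\delta')\) on \(S_{E_m}\). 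In particular \(U_m\) is injective, hence an isomorphism onto its image, and \(\|U_m\|\,\|U_m^{-1}\|<1+\delta\).

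\textbf{Step 3 (composition).} Set \(T=U_m\circ P_m|_G\). Then \(T\) is an isomorphism from \(G\) onto a finite-dimensional subspace of \(X\), and
\[
\|T\|\,\|T^{-1}\|\le\|U_m\|\,\|U_m^{-1}\|\,\|P_m|_G\|\,\|(P_m|_G)^{-1}\|<(1+\delta)^2<1+\varepsilon,
\]
which is exactly finite representability of \(E\) in \(X\).

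The only point needing care is the uniformity in Step 2. This is handled by the finite-net argument, which works because the coefficient map on \(E_m\) is bounded (finite dimension) and \(\{x_n\}\) is bounded.
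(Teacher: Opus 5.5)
Your proposal is correct and follows the paper's proof essentially step for step. Both arguments truncate $G$ into $\operatorname{span}\{b_1,\dots,b_k\}$ using the basis projections, uniformly on the compact unit sphere of $G$; then realize that span almost isometrically in $X$ via far-out terms $x_{m_{j_1}},\dots,x_{m_{j_k}}$; and finally compose the two maps. The only difference is that you write out the finite-net and Lipschitz argument for uniformity on the unit sphere, which the paper just calls ``standard''. There is also a harmless notational clash: you use $m$ both as the truncation index and in $x_{m_{j_i}}$.
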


\begin{proof}
	Let \(G\subset E\) be finite-dimensional and let \(\varepsilon>0\).
	We shall construct a linear isomorphism from \(G\) onto a subspace of
	\(X\) with distortion less than \(1+\varepsilon\).
	
	Let
	\(E_k=\operatorname{span}\{b_1,\ldots,b_k\}\),
	and denote by
	\(P_k:E\to E_k\)
	the canonical partial-sum projections associated with the basic
	sequence \(\{b_n\}_{n=1}^\infty\). Since \(P_kx\to x\) for every \(x\in E\), and the
	unit sphere of \(G\) is compact, the convergence is uniform on that
	sphere. 
	
	Hence, for every \(\eta>0\), there exists \(k\) such that
	\(\|P_kx-x\|\leq\eta\|x\|\) for \(x\in G\).
	
	Consequently,
	\begin{equation}\label{eq:Pk-G}
	(1-\eta)\|x\|
	\leq
	\|P_kx\|
	\leq
	(1+\eta)\|x\|
	\qquad\text{for}\qquad x\in G.
\end{equation}
	
	In particular, if \(0<\eta<1\), then \(P_k|_G\) is an isomorphism
	from \(G\) onto the subspace \(P_k(G)\subset E_k\).
	
	We next use the fact that \(\{b_n\}_{n=1}^\infty\) is the spreading model generated
	by \(\{x_{m_j}\}_{j=1}^\infty\). For every scalars \(a_1,\ldots,a_k\),
	\[
	\left\|\sum_{i=1}^k a_i b_i\right\|
	=
	\lim_{\substack{j_1<\cdots<j_k\\j_1\to\infty}}
	\left\|
	\sum_{i=1}^k a_i x_{m_{j_i}}
	\right\|.
	\]
	
	Since \(E_k\) is finite-dimensional and \((x_n)\) is bounded, a
	standard finite-net argument makes this convergence uniform on the
	unit sphere of \(E_k\). Thus, for every \(\delta>0\), we can choose
	\(j_1<\cdots<j_k\) sufficiently large so that the map
	\(U_k:E_k\to X\), defined by
	\[
	U_k\left(\sum_{i=1}^k a_i b_i\right)
	=
	\sum_{i=1}^k a_i x_{m_{j_i}},
	\]
	satisfies the inequality
	\begin{equation}\label{eq:Uk}
	(1-\delta)\|y\|
	\leq
	\|U_ky\|
	\leq
	(1+\delta)\|y\|
	\qquad (y\in E_k).
\end{equation}
	
	Define
	\(T=U_k\circ P_k|_G:G\to X\).
	Combining \eqref{eq:Pk-G} and \eqref{eq:Uk}, we obtain the inequality
	\[
	(1-\delta)(1-\eta)\|x\|
	\leq
	\|Tx\|
	\leq
	(1+\delta)(1+\eta)\|x\|
	\qquad\text{for}\qquad x\in G.
	\]
	
	Hence
	\(\|T\|\,\|T^{-1}\|
	\leq
	\frac{(1+\delta)(1+\eta)}
	{(1-\delta)(1-\eta)}\).
	
	Choosing \(\delta,\eta>0\) sufficiently small, we may ensure that
	\(\frac{(1+\delta)(1+\eta)}
	{(1-\delta)(1-\eta)}
	<
	1+\varepsilon\).
	
	Therefore \(G\) is \((1+\varepsilon)\)-isomorphic to the
	finite-dimensional subspace \(T(G)\subset X\).
	Since \(G\subset E\) and \(\varepsilon>0\) were arbitrary, \(E\) is
	finitely representable in \(X\).
\end{proof}

\section{Proof of Main Results}
\label{sec:proof-main-results}

\subsection{Proof of Theorem \ref{thm:main-rigidity}}

Assume that \(X_{\mathcal A}\) is superreflexive and suppose, towards a
contradiction, that \(\mathcal A\) is infinite. Choose a sequence of
pairwise distinct elements \((f_n)\subset\mathcal A\).

Since \(X_{\mathcal A}\) is reflexive, the Eberlein--Smulian theorem yields,
after passing to a subsequence, weak convergence \(f_n\rightharpoonup f\).
By Lemmas~\ref{lem:weak-limit-finite-valued} and
\ref{lem:normalized-difference-sequence}, we obtain a normalized weakly
null sequence taking values in a fixed finite subset of \(\mathbb R\).

Lemma~\ref{lem:finite-range-spreading} provides a subsequence generating a
spreading model whose closed linear span \(E\) is nonreflexive. By
Proposition~\ref{prop:spreading-finite-representability}, \(E\) is finitely
representable in \(X_{\mathcal A}\).

This contradicts the superreflexivity of \(X_{\mathcal A}\).

\subsection{Proof of Corollary \ref{cor:finite-dimensional-family}}

\begin{proof}
	Let us put
	\(\mathcal A_L
	=
	\left\{
	\mathbf 1_{w^{-1}L}:w\in\Sigma^*
	\right\}\).
	Then
	\(\mathcal A_L\subset\{0,1\}^{\Sigma^*}\)
	and, by definition,
	\(X_L
	=
	\overline{\operatorname{span}}\mathcal A_L\).
	
	According to Theorem \ref{thm:myhill-nerode}, \(L\) is regular if and only if the
	family of left quotients
	\(\{w^{-1}L:w\in\Sigma^*\}\)
	is finite. Since two subsets of \(\Sigma^*\) are equal if and only if
	their characteristic functions are equal, this is equivalent to
	\(\mathcal A_L\) being finite.
	
	Applying Theorem~\ref{thm:main-rigidity} with
	\(\Omega=\Sigma^*\) and \(F=\{0,1\}\),
	we obtain the equivalences
	\[
	\mathcal A_L\text{ is finite}
	\Longleftrightarrow
	X_L\text{ is finite-dimensional}
	\Longleftrightarrow
	X_L\text{ is superreflexive},
	\]
	i.e, \(X_L\) admits an equivalent uniformly convex norm.
	
	Combining these equivalences with Theorem Theorem \ref{thm:myhill-nerode} finishes the proof.
\end{proof}

\section{Applications to Formal Languages}
\label{sec:formal-language-applications}

We now apply the abstract rigidity result of the preceding section to
Banach spaces generated by derivatives of formal languages. Throughout
this section, \(\Sigma\) is a finite alphabet and \(L\subseteq\Sigma^*\)
is a language. We write
\(\mathcal D_L
=
\left\{
\mathbf 1_{w^{-1}L}:w\in\Sigma^*
\right\}\)
for the family of characteristic functions of the left derivatives of
\(L\), and
\(X_L
=
\overline{\operatorname{span}}\mathcal D_L
\subset \ell_\infty(\Sigma^*)\).

Thus \(X_L\) is the Banach space generated by the derivative orbit of
\(L\).

\begin{proposition}\label{prop:shift-invariance}
	For every \(u\in\Sigma^*\), the operator
	\(R_u:\ell_\infty(\Sigma^*)\to\ell_\infty(\Sigma^*)\), defined by
	\((R_uf)(v)=f(uv)\),
	is a linear contraction. 
	
	Moreover,
	\(R_u(X_L)\subseteq X_L\), 
	and
	\(R_u\mathbf 1_{w^{-1}L}
	=
	\mathbf 1_{(wu)^{-1}L}\).
\end{proposition}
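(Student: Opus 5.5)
The plan is to verify the three assertions of Proposition~\ref{prop:shift-invariance} directly from the definitions, in the order: linearity and contractivity of \(R_u\), the action on generators, and then invariance of \(X_L\) by a density and continuity argument.

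\emph{Linearity and contractivity.} Linearity is immediate, since \((R_u(\lambda f+\mu g))(v)=\lambda f(uv)+\mu g(uv)\). For contractivity, the map \(v\mapsto uv\) sends \(\Sigma^*\) into \(\Sigma^*\). Hence
\[
\|R_uf\|_\infty=\sup_{v\in\Sigma^*}|f(uv)|\le\sup_{w\in\Sigma^*}|f(w)|=\|f\|_\infty ,
\]
so \(R_u\) maps \(\ell_\infty(\Sigma^*)\) into itself with \(\|R_u\|\le 1\). One may also note that \(\|R_u\|=1\) for the record, by testing on \(f\equiv 1\), although this is not needed.

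\emph{Action on generators.} Fix \(w\in\Sigma^*\) and \(v\in\Sigma^*\). By the definition of \(R_u\) and of the left quotient (Definition~\ref{def:left-quotient}), together with associativity of concatenation in the free monoid, we get
\[
(R_u\mathbf 1_{w^{-1}L})(v)=\mathbf 1_{w^{-1}L}(uv)=\mathbf 1_L(w(uv))=\mathbf 1_L((wu)v)=\mathbf 1_{(wu)^{-1}L}(v).
\]
This proves the identity \(R_u\mathbf 1_{w^{-1}L}=\mathbf 1_{(wu)^{-1}L}\). The only care needed here is the order of the factors: \(R_u\) appends \(u\) \emph{after} \(w\), and this is consistent with \(q_w^L(v)=\mathbf 1_L(wv)\).

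\emph{Invariance of \(X_L\).} By the previous step, \(R_u\) maps \(\mathcal D_L\) into \(\mathcal D_L\). By linearity it then maps \(\operatorname{span}\mathcal D_L\) into itself. Since \(R_u\) is continuous, being bounded by the first step, and \(X_L\) is closed, we obtain
\[
R_u(X_L)=R_u\bigl(\overline{\operatorname{span}}\mathcal D_L\bigr)\subseteq\overline{R_u(\operatorname{span}\mathcal D_L)}\subseteq\overline{\operatorname{span}}\mathcal D_L=X_L .
\]

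There is no real obstacle in this argument. The one step requiring attention is the closure passage in the last part, which is where boundedness of \(R_u\) is used.
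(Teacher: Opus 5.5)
Your proposal is correct and follows essentially the same route as the paper: the same supremum estimate gives contractivity, the same pointwise computation \(\mathbf 1_{w^{-1}L}(uv)=\mathbf 1_L(wuv)=\mathbf 1_{(wu)^{-1}L}(v)\) handles the generators, and boundedness of \(R_u\) carries the span into its closure. Your extra remark that \(\|R_u\|=1\), via \(R_u\mathbf 1=\mathbf 1\), is correct but is not needed and is not stated in the paper.
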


\begin{proof}
	Fix \(u\in\Sigma^*\). It is immediate from the definition that
	\(R_u\) is linear. Moreover, for every
	\(f\in\ell_\infty(\Sigma^*)\),
	\[
	\|R_uf\|_\infty
	=
	\sup_{v\in\Sigma^*}|f(uv)|
	\leq
	\sup_{z\in\Sigma^*}|f(z)|
	=
	\|f\|_\infty.
	\]
	
	Hence
	\(\|R_u\|\leq 1\),
	so \(R_u\) is a contraction.
	
	We next consider its action on the generators of \(X_L\). Let
	\(w\in\Sigma^*\). 
	
	For every \(v\in\Sigma^*\) the next assertions are equivalent
	\begin{itemize}
		\item \(\bigl(R_u\mathbf 1_{w^{-1}L}\bigr)(v)
		=
		\mathbf 1_{w^{-1}L}(uv)=
		1\) 
		\item \(uv\in w^{-1}L\) 
		\item \(wuv\in L\) 
		\item \(v\in (wu)^{-1}L\).
	\end{itemize}
	
	Therefore
	\(R_u\mathbf 1_{w^{-1}L}
	=
	\mathbf 1_{(wu)^{-1}L}\).
	
	In particular, \(R_u\) maps every generator of \(X_L\) to another
	generator of \(X_L\). Thus
	\(R_u\left(
	\operatorname{span}
	\left\{
	\mathbf 1_{w^{-1}L}:w\in\Sigma^*
	\right\}
	\right)
	\subseteq
	\operatorname{span}
	\left\{
	\mathbf 1_{w^{-1}L}:w\in\Sigma^*
	\right\}\).
	
	Since \(R_u\) is bounded, by taking closures we get
	\(R_u(X_L)\subseteq X_L\).
\end{proof}

\begin{proposition}\label{prop:cyclic-language-space}
	The space \(X_L\) is the smallest closed subspace of
	\(\ell_\infty(\Sigma^*)\) which contains \(\mathbf 1_L\) and is
	invariant under all operators \(R_u\), \(u\in\Sigma^*\).
\end{proposition}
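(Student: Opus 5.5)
The plan is to prove two inclusions: \(X_L\) belongs to the class of closed subspaces described in the statement, and \(X_L\) is contained in every member of that class. Write \(\mathcal S\) for the family of closed subspaces \(Y\subseteq\ell_\infty(\Sigma^*)\) with \(\mathbf 1_L\in Y\) and \(R_u(Y)\subseteq Y\) for all \(u\in\Sigma^*\).

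First, \(X_L\in\mathcal S\).
\begin{itemize}
	\item \(X_L\) is closed by definition.
	\item Taking \(w=\varepsilon\) gives \(\varepsilon^{-1}L=L\), so \(\mathbf 1_L\in\mathcal D_L\subseteq X_L\).
	\item Invariance \(R_u(X_L)\subseteq X_L\) is exactly the content of Proposition~\ref{prop:shift-invariance}.
\end{itemize}

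Second, \(X_L\subseteq Y\) for every \(Y\in\mathcal S\).
\begin{itemize}
	\item Apply Proposition~\ref{prop:shift-invariance} with \(w=\varepsilon\). This gives
	\[
	R_u\mathbf 1_L=R_u\mathbf 1_{\varepsilon^{-1}L}=\mathbf 1_{u^{-1}L}
	\qquad\text{for every }u\in\Sigma^*.
	\]
	One can also check this directly: \((R_u\mathbf 1_L)(v)=\mathbf 1_L(uv)\).
	\item Since \(\mathbf 1_L\in Y\) and \(Y\) is \(R_u\)-invariant, every generator \(\mathbf 1_{u^{-1}L}\) lies in \(Y\).
	\item \(Y\) is a linear subspace, so it contains \(\operatorname{span}\mathcal D_L\).
	\item \(Y\) is closed, so it contains the norm closure, which is \(X_L\).
\end{itemize}

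Hence \(X_L\) is the least element of \(\mathcal S\) under inclusion. Equivalently, \(X_L=\bigcap_{Y\in\mathcal S}Y\), where this intersection is itself closed, invariant and contains \(\mathbf 1_L\).

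There is no real obstacle. The only point needing care is the identification \(R_u\mathbf 1_L=\mathbf 1_{u^{-1}L}\). This shows that the orbit of the single vector \(\mathbf 1_L\) under the operators \(R_u\) is precisely the generating family \(\mathcal D_L\), and that the action is a genuine (right) monoid action, \(R_uR_{u'}=R_{u'u}\). The order of composition does not matter for the argument, since only membership of orbit points is used.
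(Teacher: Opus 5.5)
Your proposal is correct and follows the paper's proof essentially step for step. Both show \(\mathbf 1_L\in X_L\) via \(\varepsilon^{-1}L=L\), take invariance from Proposition~\ref{prop:shift-invariance}, and get minimality from \(R_u\mathbf 1_L=\mathbf 1_{u^{-1}L}\), so that every generator, hence its span and closure \(X_L\), lies in any closed invariant \(Y\) containing \(\mathbf 1_L\).
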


\begin{proof}
	By definition,
	\(X_L
	=
	\overline{\operatorname{span}}
	\left\{
	\mathbf 1_{w^{-1}L}:w\in\Sigma^*
	\right\}\).
	
	Since the empty word \(\varepsilon\) belongs to \(\Sigma^*\) and
	\(\varepsilon^{-1}L=L\), we have
	\(\mathbf 1_L\in X_L\).
	
	Moreover, by Proposition~\ref{prop:shift-invariance}, there holds
	\(R_u(X_L)\subseteq X_L\) for \(u\in\Sigma^*\).
	
	Thus \(X_L\) is a closed subspace of
	\(\ell_\infty(\Sigma^*)\) containing \(\mathbf 1_L\) and invariant
	under every \(R_u\).
	
	It remains to prove minimality. 
	
	Let
	\(Y\subseteq\ell_\infty(\Sigma^*)\) be any closed subspace such that
	\(\mathbf 1_L\in Y\)
	and
	\(R_u(Y)\subseteq Y\) for \(u\in\Sigma^*\).
	
For every \(w\in\Sigma^*\), invariance of \(Y\) gives
	\(R_w\mathbf 1_L\in Y\).
	Since \(R_w(Y)\subseteq Y\) and \(\mathbf 1_L\in Y\), we have
	\(R_w\mathbf 1_L\in Y\) for every \(w\in\Sigma^*\).
	
	By Proposition~\ref{prop:shift-invariance}, applied with
	\(w=\varepsilon\), we have
	\(R_w\mathbf 1_L
	=
	\mathbf 1_{w^{-1}L}\).
	
	Hence
	\(\mathbf 1_{w^{-1}L}\in Y\) for \(w\in\Sigma^*\). 
	Therefore
	\(\operatorname{span}
	\left\{
	\mathbf 1_{w^{-1}L}:w\in\Sigma^*
	\right\}
	\subseteq Y\).
	
	Since \(Y\) is closed, taking closures yields
	\(X_L\subseteq Y\).
Since \(Y\) is a closed linear subspace containing
	\(\mathbf 1_{w^{-1}L}\) for every \(w\in\Sigma^*\), it also contains
	their closed linear span. Hence,
	\(X_L\subseteq Y\).
	
	Consequently, \(X_L\) is the smallest closed subspace of
	\(\ell_\infty(\Sigma^*)\) containing \(\mathbf 1_L\) and invariant
	under all operators \(R_u\), \(u\in\Sigma^*\).
\end{proof}

\begin{proposition}\label{prop:dimension-derivatives}
	If \(L\) has exactly \(N\) distinct left derivatives, then
	\(\dim X_L\leq N\).
\end{proposition}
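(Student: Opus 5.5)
The plan is to observe that \(X_L\) is, by definition, the closed linear span of the set \(\mathcal D_L=\{\mathbf 1_{w^{-1}L}:w\in\Sigma^*\}\). If \(L\) has exactly \(N\) distinct left derivatives, this indexed family, although indexed by the infinite set \(\Sigma^*\), contains only \(N\) distinct functions. The reason is that \(\mathbf 1_{w^{-1}L}=\mathbf 1_{u^{-1}L}\) holds precisely when \(w^{-1}L=u^{-1}L\), since two subsets of \(\Sigma^*\) are equal if and only if their characteristic functions coincide. We therefore fix representatives \(w_1,\ldots,w_N\in\Sigma^*\) whose derivatives \(w_1^{-1}L,\ldots,w_N^{-1}L\) are the \(N\) distinct elements of \(\mathcal D(L)\). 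Then every generator \(\mathbf 1_{w^{-1}L}\) equals one of \(\mathbf 1_{w_1^{-1}L},\ldots,\mathbf 1_{w_N^{-1}L}\).

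From this we obtain
\[
\operatorname{span}\mathcal D_L=\operatorname{span}\{\mathbf 1_{w_1^{-1}L},\ldots,\mathbf 1_{w_N^{-1}L}\},
\]
a linear subspace of \(\ell_\infty(\Sigma^*)\) of dimension at most \(N\). It may be strictly smaller, because the characteristic functions need not be linearly independent. Every finite-dimensional subspace of a normed space is closed, so the norm closure adds nothing. Hence \(X_L\) coincides with this span and \(\dim X_L\leq N\).

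There is no real obstacle here. The only points to handle explicitly are that \(N\geq 1\), because \(\varepsilon^{-1}L=L\) always belongs to \(\mathcal D(L)\), and the standard fact that finite-dimensional subspaces are closed. This fact follows from the equivalence of norms on finite-dimensional spaces and hence their completeness. The result is consistent with Corollary~\ref{cor:main-language-rigidity} and with Theorem~\ref{thm:myhill-nerode}, which identifies \(N\) with the number of states of the minimal automaton.
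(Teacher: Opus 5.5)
Your proposal is correct and follows essentially the same route as the paper. You reduce the generating family to the \(N\) characteristic functions \(\mathbf 1_{w_1^{-1}L},\ldots,\mathbf 1_{w_N^{-1}L}\), then use that a finite-dimensional span is automatically closed, so \(X_L\) equals that span and has dimension at most \(N\).
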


\begin{proof}
	Assume that \(L\) has exactly \(N\) distinct left derivatives. Then
	there exist \(w_1,\ldots,w_N\in\Sigma^*\) such that
	\(\{w^{-1}L:w\in\Sigma^*\}
	=
	\{w_1^{-1}L,\ldots,w_N^{-1}L\}\).
	
	Consequently,
	\(X_L
	=
	\overline{\operatorname{span}}
	\left\{
	\mathbf 1_{w_1^{-1}L},\ldots,
	\mathbf 1_{w_N^{-1}L}
	\right\}\).
	
	Since the span of finitely many vectors is finite-dimensional and
	therefore closed, we obtain
	\(X_L
	=
	\operatorname{span}
	\left\{
	\mathbf 1_{w_1^{-1}L},\ldots,
	\mathbf 1_{w_N^{-1}L}
	\right\}\).
	
	Hence
	\(\dim X_L\leq N\).
\end{proof}

\begin{remark}
	The inequality in Proposition~\ref{prop:dimension-derivatives} need
	not be an equality, since the characteristic functions of distinct
	left derivatives of \(L\) need not be linearly independent.
\end{remark}

\section{Dynamical Representation}
\label{sec:dynamical-representation}

We now give a dynamical realization of the spaces \(X_L\). The main
observation is that the norm structure of \(X_L\) can be recovered from
the orbit closure of the characteristic function of \(L\) under the
natural shift action of the free monoid.

Let
\(\Omega_\Sigma=\{0,1\}^{\Sigma^*}\)
be equipped with the product topology, where \(\{0,1\}\) is discrete.
Since \(\Sigma^*\) is countable, \(\Omega_\Sigma\) is compact and
metrizable.

For \(v\in\Sigma^*\), define
\(\rho_v:\Omega_\Sigma\to\Omega_\Sigma\)
by
\((\rho_vx)(u)=x(uv)\) for \(u\in\Sigma^*\).

For \(u\in\Sigma^*\), let
\(\pi_u:\Omega_\Sigma\to\mathbb R\), where
\(\pi_u(x)=x(u)\)
be the \(u\)-th coordinate function.

For a language \(L\subseteq\Sigma^*\), put
\(x_L=\mathbf 1_L\in\Omega_\Sigma\)
and define its orbit closure by
\(K_L
=
\overline{
	\{\rho_vx_L:v\in\Sigma^*\}
}
\subseteq\Omega_\Sigma\).

\begin{proposition}\label{prop:language-dynamical-system}
	The set \(K_L\) is a nonempty compact metrizable subset of
	\(\Omega_\Sigma\),
	\(\rho_v(K_L)\subseteq K_L\) for \(v\in\Sigma^*\), and \(x_L\) has a dense orbit in \(K_L\).
\end{proposition}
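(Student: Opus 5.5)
The plan is to verify the three assertions directly from the definitions, using only the product topology on \(\Omega_\Sigma\) and the continuity of the shift maps.

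First, nonemptiness, compactness and metrizability. The set \(K_L\) contains \(x_L=\rho_\varepsilon x_L\), so it is nonempty. It is a closed subset of \(\Omega_\Sigma\). By Theorem~\ref{thm:tychonoff}, \(\Omega_\Sigma\) is compact, and it is metrizable because \(\Sigma^*\) is countable. Closed subsets inherit both properties, so \(K_L\) is compact and metrizable.

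Second, invariance. I will first show that each \(\rho_v\) is continuous on \(\Omega_\Sigma\). In the product topology it suffices to check that every coordinate of \(\rho_v\) is continuous. This holds because
\[
\pi_u\circ\rho_v=\pi_{uv},
\]
which is a coordinate projection and hence continuous. Next, the orbit is mapped into itself. For any \(w\) and any \(u\),
\[
(\rho_v\rho_w x)(u)=(\rho_w x)(uv)=x(uvw).
\]
So \(\rho_v\rho_w=\rho_{vw}\) as maps, and \(\rho_v\rho_w x_L=\rho_{vw}x_L\) again lies in the orbit. Continuity then gives
\[
\rho_v(\overline{O})\subseteq\overline{\rho_v(O)}\subseteq\overline{O}=K_L,
\]
where \(O\) denotes the orbit of \(x_L\).

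Third, transitivity. The orbit \(\{\rho_vx_L:v\in\Sigma^*\}\) is dense in its own closure \(K_L\) by definition, so \(x_L\) has a dense orbit in \(K_L\).

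The main obstacle is getting the composition law right, since the order of the words in \(\rho_v\rho_w=\rho_{vw}\) is easy to reverse. It has to be checked carefully from \((\rho_vx)(u)=x(uv)\), but for invariance it does not matter which order holds: only membership of the image in the orbit is needed. Everything else is routine.
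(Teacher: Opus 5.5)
Your proof is correct and follows the paper's argument essentially step for step. You use Tychonoff and the countability of \(\Sigma^*\) for compactness and metrizability, continuity of \(\rho_v\) via \(\pi_u\circ\rho_v=\pi_{uv}\), the composition law \(\rho_v\rho_w=\rho_{vw}\) to show the orbit is mapped into itself and hence \(\rho_v(K_L)\subseteq K_L\), and density of the orbit directly from the definition of \(K_L\).
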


\begin{proof}
	By definition,
	\(K_L
	=
	\overline{
		\{\rho_wx_L:w\in\Sigma^*\}
	}
	\subseteq\Omega_\Sigma\).
	
	Since \(\rho_\varepsilon x_L=x_L\), the orbit of \(x_L\) is nonempty,
	and hence \(K_L\neq\varnothing\).
	
	The space
	\(\Omega_\Sigma=\{0,1\}^{\Sigma^*}\)
	is compact by Theorem \ref{thm:tychonoff}. Since \(\Sigma^*\) is countable,
	\(\Omega_\Sigma\) is also metrizable. As \(K_L\) is closed in
	\(\Omega_\Sigma\), it follows that \(K_L\) is compact and metrizable.
	
	We next prove shift invariance. 
	
	Fix \(v\in\Sigma^*\). The map
	\(\rho_v:\Omega_\Sigma\to\Omega_\Sigma\) is continuous in the product
	topology. 
	
	Indeed, for every \(u\in\Sigma^*\),
	\(\pi_u\circ\rho_v=\pi_{uv}\),
	and the coordinate functions are continuous.
	
	For \(v,w,u\in\Sigma^*\) and \(x\in\Omega_\Sigma\), we have
	\[
		\bigl(\rho_v(\rho_wx)\bigr)(u)
		=
		(\rho_wx)(uv)=
		x(uvw)=
		(\rho_{vw}x)(u).
	\]
		
	Therefore
	\(\rho_v\circ\rho_w=\rho_{vw}\).
	
	In particular,
	\(\rho_v
	\left(
	\{\rho_wx_L:w\in\Sigma^*\}
	\right)
	\subseteq
	\{\rho_wx_L:w\in\Sigma^*\}\).
	
	Using the continuity of \(\rho_v\), we obtain the inclusions
	\[
		\rho_v(K_L)
		=
		\rho_v
		\left(
		\overline{\{\rho_wx_L:w\in\Sigma^*\}}
		\right)\subseteq
		\overline{
			\rho_v\left(\{\rho_wx_L:w\in\Sigma^*\}\right)
		}\subseteq
		K_L.
	\]
	
	Thus \(K_L\) is invariant under every shift \(\rho_v\).
	
	Finally, the orbit of \(x_L\) is
	\(\mathcal O(x_L)
	=
	\{\rho_vx_L:v\in\Sigma^*\}\),
	and \(K_L\) was defined precisely as its closure. 
	
	Hence
	\(\overline{\mathcal O(x_L)}=K_L\),
	so \(x_L\) has a dense orbit in \(K_L\).
\end{proof}

For \(u\in\Sigma^*\), denote by
\(\pi_u^L=\pi_u|_{K_L}\)
the restriction of the \(u\)-th coordinate function to \(K_L\).

\begin{theorem}\label{thm:dynamical-representation}
	There exists a unique surjective linear isometry
	\[
	J_L:X_L\longrightarrow
	\overline{\operatorname{span}
		\{\pi_u^L:u\in\Sigma^*\}}^{\,C(K_L)}
	\]
	such that
	\(J_L\bigl(\mathbf 1_{u^{-1}L}\bigr)=\pi_u^L\) for \(u\in\Sigma^*\).
\end{theorem}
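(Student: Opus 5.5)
The plan is to define $J_L$ first on finite linear combinations of the generators, show that it is isometric there, and then extend by continuity. For a finitely supported family of scalars $(a_u)_{u\in\Sigma^*}$, set
\[
J_L\Bigl(\sum_u a_u\mathbf 1_{u^{-1}L}\Bigr)=\sum_u a_u\pi_u^L .
\]
The key identity, to be verified before anything else, is that for every $x\in K_L$ the value of the right-hand side at $x$ is a limit of values of the left-hand side. On the orbit this is exact. Using $(\rho_v x_L)(u)=x_L(uv)=\mathbf 1_L(uv)=\mathbf 1_{u^{-1}L}(v)$, we get
\[
\Bigl(\sum_u a_u\pi_u^L\Bigr)(\rho_v x_L)=\Bigl(\sum_u a_u\mathbf 1_{u^{-1}L}\Bigr)(v)\qquad\text{for every }v\in\Sigma^*.
\]
This identity does not depend on how the function $\sum_u a_u\mathbf 1_{u^{-1}L}$ is represented as a combination. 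In particular, if that function is $0$, then $\sum_u a_u\pi_u^L$ vanishes on the orbit $\{\rho_v x_L:v\in\Sigma^*\}$.

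Next, since each $\pi_u^L$ is continuous on $K_L$, any finite combination $g=\sum_u a_u\pi_u^L$ is continuous. By Proposition~\ref{prop:language-dynamical-system} the orbit of $x_L$ is dense in $K_L$. A continuous function vanishing on a dense set vanishes identically, so $J_L$ is well defined on $\operatorname{span}\{\mathbf 1_{u^{-1}L}\}$; it is clearly linear. For the same reason the supremum of $|g|$ over $K_L$ equals its supremum over the orbit, which by the identity above is $\sup_v\bigl|\sum_u a_u\mathbf 1_{u^{-1}L}(v)\bigr|$. Hence
\[
\|J_L f\|_{C(K_L)}=\|f\|_\infty\qquad\text{for all } f\in\operatorname{span}\{\mathbf 1_{u^{-1}L}:u\in\Sigma^*\}.
\]
I expect this step, using density of the orbit to obtain both well-definedness and norm equality, to be the heart of the argument. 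It only requires care about evaluating at orbit points rather than at arbitrary limit points.

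Finally, a linear isometry from a dense subspace extends uniquely to a linear isometry $J_L$ from $X_L$ into $C(K_L)$. Its range is closed, because an isometric image of a complete space is complete. The range also contains $\operatorname{span}\{\pi_u^L\}$, and so it equals the closure $\overline{\operatorname{span}\{\pi_u^L:u\in\Sigma^*\}}$, which gives surjectivity. Uniqueness holds because any bounded linear map with the prescribed values on the generators agrees with $J_L$ on their span, which is dense in $X_L$.
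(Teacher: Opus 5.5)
Your proposal is correct and follows essentially the same route as the paper: the orbit identity $\pi_u^L(\rho_v x_L)=\mathbf 1_{u^{-1}L}(v)$, combined with density of the orbit in $K_L$, gives well-definedness and the isometric norm equality on the span of the generators. Extension by continuity, closedness of the isometric range, and density of the span then yield surjectivity and uniqueness; the inclusion of the range in $\overline{\operatorname{span}}\{\pi_u^L:u\in\Sigma^*\}$, which is immediate from continuity, is left implicit by both you and the paper.
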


\begin{proof}
	Let
	\(\mathcal D_L
	=
	\left\{
	\mathbf 1_{u^{-1}L}:u\in\Sigma^*
	\right\}\) and let us define a linear map on the algebraic span of
	\(\mathcal D_L\) by
	\[
	J_L
	\left(
	\sum_{i=1}^n a_i\mathbf 1_{u_i^{-1}L}
	\right)
	=
	\sum_{i=1}^n a_i\pi_{u_i}^L.
	\]
	
	We will show that \(J_L\) map is well defined and isometric.
	
	For every \(u,v\in\Sigma^*\), there holds
	\[
		\pi_u^L(\rho_vx_L)=
		(\rho_vx_L)(u)=
		x_L(uv)=
		\mathbf 1_L(uv)=
		\mathbf 1_{u^{-1}L}(v).
	\]
	
	Hence, for every finite collection
	\(u_1,\ldots,u_n\in\Sigma^*\) and scalars
	\(a_1,\ldots,a_n\),
	\[
		\left\|
		\sum_{i=1}^n
		a_i\mathbf 1_{u_i^{-1}L}
		\right\|_\infty=
		\sup_{v\in\Sigma^*}
		\left|
		\sum_{i=1}^n
		a_i\mathbf 1_{u_i^{-1}L}(v)
		\right|=
		\sup_{v\in\Sigma^*}
		\left|
		\sum_{i=1}^n
		a_i\pi_{u_i}^L(\rho_vx_L)
		\right|.
	\]
	
	Since the orbit
	\(\{\rho_vx_L:v\in\Sigma^*\}\)
	is dense in \(K_L\), and the map
	\(x\longmapsto
	\sum_{i=1}^n a_i\pi_{u_i}^L(x)\)
	is continuous on \(K_L\), we obtain
	\[
	\sup_{v\in\Sigma^*}
	\left|
	\sum_{i=1}^n
	a_i\pi_{u_i}^L(\rho_vx_L)
	\right|
	=
	\sup_{x\in K_L}
	\left|
	\sum_{i=1}^n
	a_i\pi_{u_i}^L(x)
	\right|.
	\]
	
	Therefore
	\begin{equation}\label{eq:dynamical-isometry}
	\left\|
	\sum_{i=1}^n
	a_i\mathbf 1_{u_i^{-1}L}
	\right\|_\infty
	=
	\left\|
	\sum_{i=1}^n
	a_i\pi_{u_i}^L
	\right\|_{C(K_L)}.
\end{equation}
	
	In particular, if
	\(\sum_{i=1}^n
	a_i\mathbf 1_{u_i^{-1}L}=0\),
	then the right-hand side of
	\eqref{eq:dynamical-isometry} is zero. Thus
	\(\sum_{i=1}^n a_i\pi_{u_i}^L=0\),
	which shows that \(J_L\) is well defined. 
	
	Equation
	\eqref{eq:dynamical-isometry} also shows that \(J_L\) is an isometry.
	
	By definition,
	\(X_L
	=
	\overline{\operatorname{span}\mathcal D_L}\).
	
	Hence \(J_L\) extends uniquely by continuity to a linear isometry
	\[
	J_L:X_L\longrightarrow C(K_L).
	\]
	
The range of an isometry from a Banach space is closed.
Since \(X_L\) is complete and \(J_L\) is an isometry,
		\(J_L(X_L)\) is a closed subspace of \(C(K_L)\).
	 Moreover, the
	range of \(J_L\) contains every \(\pi_u^L\), since
	\(J_L\bigl(\mathbf 1_{u^{-1}L}\bigr)=\pi_u^L\).
	
	Consequently,
	\(J_L(X_L)
	=
	\overline{\operatorname{span}
		\{\pi_u^L:u\in\Sigma^*\}}^{\,C(K_L)}\). 
Thus \(J_L\) is surjective onto the stated space.
Thus, \(J_L\) is surjective onto
	\(\overline{\operatorname{span}}
	\{\pi_u^L:u\in\Sigma^*\}\subseteq C(K_L)\).
	
	Finally, uniqueness follows because the family
	\(\left\{
	\mathbf 1_{u^{-1}L}:u\in\Sigma^*
	\right\}\)
	has dense linear span in \(X_L\). Therefore any continuous linear
	operator satisfying
	\[
	J_L\bigl(\mathbf 1_{u^{-1}L}\bigr)=\pi_u^L
	\]
	for every \(u\in\Sigma^*\) must coincide with \(J_L\) on all of
	\(X_L\).
\end{proof}

\begin{corollary}\label{cor:transitive-subshift-characterization}
	Let \(K\subseteq\Omega_\Sigma\) be nonempty, compact, and
	shift-invariant, and suppose that \(x_0\in K\) has a dense orbit.
	Define
	\(L_{x_0}
	=
	\{u\in\Sigma^*:x_0(u)=1\}\).
	
	Then
	\begin{enumerate}
	\item \(x_{L_{x_0}}=x_0\)
	\item \(K_{L_{x_0}}=K\)
	\item 
\(X_{L_{x_0}}
	\cong
	\overline{\operatorname{span}
		\{\pi_u|_K:u\in\Sigma^*\}}^{\,C(K)}\)\color{red}linearly and isometrically.
	\item There exists a surjective linear isometry
		\[J_{L_{x_0}}:X_{L_{x_0}}
		\longrightarrow
		\overline{\operatorname{span}
			\{\pi_u|_K:u\in\Sigma^*\}}^{\,C(K)}
		\]
		satisfying
		\(J_{L_{x_0}}
		\bigl(\mathbf 1_{u^{-1}L_{x_0}}\bigr)
		=
		\pi_u|_K
		\qquad (u\in\Sigma^*)\).
	\end{enumerate}
\end{corollary}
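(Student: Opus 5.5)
The plan is to reduce all four assertions to Theorem~\ref{thm:dynamical-representation} by checking that the language \(L_{x_0}\) recovers \(x_0\) exactly. Then its orbit closure coincides with the closure of the orbit of \(x_0\), which is \(K\) by hypothesis.

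\emph{Step 1.} Prove item~1 directly from the definitions. By construction, \(x_{L_{x_0}}=\mathbf 1_{L_{x_0}}\), and for every \(u\in\Sigma^*\) we have \(\mathbf 1_{L_{x_0}}(u)=1\) if and only if \(x_0(u)=1\). Since \(x_0\) is \(\{0,1\}\)-valued, the two functions agree at every coordinate, so \(x_{L_{x_0}}=x_0\).

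\emph{Step 2.} Prove item~2. By item~1,
\[
K_{L_{x_0}}=\overline{\{\rho_v x_0:v\in\Sigma^*\}}.
\]
I read ``\(x_0\) has a dense orbit in \(K\)'' as meaning that \(\{\rho_vx_0:v\in\Sigma^*\}\subseteq K\) and that this set is dense in \(K\). The inclusion also follows from the shift-invariance of \(K\) together with \(x_0\in K\). Since \(K\) is compact, hence closed in the Hausdorff space \(\Omega_\Sigma\), the closure of this orbit taken in \(\Omega_\Sigma\) is contained in \(K\). Density gives the reverse inclusion, so \(K_{L_{x_0}}=K\). The one point needing care is that the closure defining \(K_L\) is taken in \(\Omega_\Sigma\) while density is stated relative to \(K\); closedness of \(K\) makes the two closures agree.

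\emph{Step 3.} Obtain items~3 and~4 by applying Theorem~\ref{thm:dynamical-representation} to \(L=L_{x_0}\). It yields a unique surjective linear isometry
\[
J_{L_{x_0}}:X_{L_{x_0}}\to\overline{\operatorname{span}}\{\pi_u|_{K_{L_{x_0}}}:u\in\Sigma^*\}
\]
with \(J_{L_{x_0}}(\mathbf 1_{u^{-1}L_{x_0}})=\pi_u|_{K_{L_{x_0}}}\). Substituting \(K_{L_{x_0}}=K\) from Step~2 turns this into exactly item~4, and item~3 is its immediate restatement.

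No step is a genuine obstacle; the only subtlety is the closure bookkeeping in Step~2.
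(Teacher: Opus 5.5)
Your proposal is correct and follows the paper's proof essentially step for step. Item~1 comes from the definition of \(L_{x_0}\), item~2 from \(x_{L_{x_0}}=x_0\) and the dense-orbit hypothesis, and items~3--4 by applying Theorem~\ref{thm:dynamical-representation} to \(L_{x_0}\) and substituting \(K_{L_{x_0}}=K\); your explicit check that \(K\) is closed in \(\Omega_\Sigma\), so that the orbit closure in \(\Omega_\Sigma\) agrees with the closure relative to \(K\), fills in a detail the paper leaves implicit.
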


\begin{proof}
	By the definition of \(L_{x_0}\), for every \(u\in\Sigma^*\), the following are equivalent
	\begin{itemize}
		\item \(\mathbf 1_{L_{x_0}}(u)
	=
	1\)
	\item \(u\in L_{x_0}\)
	\item \(x_0(u)=1\).
\end{itemize}

	Since \(x_0\in\Omega_\Sigma=\{0,1\}^{\Sigma^*}\), it follows that
	\(\mathbf 1_{L_{x_0}}(u)=x_0(u)\) for \(u\in\Sigma^*\).
	
	Hence
	\(x_{L_{x_0}}
	=
	\mathbf 1_{L_{x_0}}
	=
	x_0\).
	
	By definition of the orbit closure associated with a language,
	\[
	K_{L_{x_0}}
	=
	\overline{
		\{\rho_vx_{L_{x_0}}:v\in\Sigma^*\}
	}.
	\]
	
	Using \(x_{L_{x_0}}=x_0\), we obtain
	\(K_{L_{x_0}}
	=
	\overline{
		\{\rho_vx_0:v\in\Sigma^*\}
	}\).
	Since \(x_0\) has a dense orbit in \(K\), the right-hand side is
	exactly \(K\). Therefore
	\(K_{L_{x_0}}=K\).
	
	Finally, applying
	Theorem~\ref{thm:dynamical-representation} to the language
	\(L_{x_0}\), we obtain a surjective linear isometry
	\(X_{L_{x_0}}
	\longrightarrow
	\overline{\operatorname{span}
		\{\pi_u^{L_{x_0}}:u\in\Sigma^*\}}^{\,C(K_{L_{x_0}})}\).
		
	Since
	\(K_{L_{x_0}}=K\),
	we have
	\(\pi_u^{L_{x_0}}
	=
	\pi_u|_K\) for \(u\in\Sigma^*\).
	
	Consequently,
	\(X_{L_{x_0}}
	\cong
	\overline{\operatorname{span}
		\{\pi_u|_K:u\in\Sigma^*\}}^{\,C(K)}\)
	linearly and isometrically.
\end{proof}

Thus the Banach spaces generated by formal-language derivatives admit a
canonical dynamical realization: they are precisely the coordinate
Banach spaces associated with transitive binary shift systems arising
from the free-monoid action on \(\Omega_\Sigma\).

\section{Hankel Operators}
\label{sec:hankel-operators}

We next associate with each language a canonical operator whose range
encodes the Banach space \(X_L\). This provides an operator-theoretic
interpretation of regularity and leads to an exact quantitative
separation between regular and nonregular languages.

The Boolean Hankel matrix associated with \(L\) is the matrix
\[_L=(\mathbf 1_L(uv))_{u,v\in\Sigma^*}.\]

Its natural realization as an operator from
\(\ell_1(\Sigma^*)\) into \(\ell_\infty(\Sigma^*)\) is given below.

\begin{definition}\label{def:language-hankel-operator}
	The language Hankel operator is the linear operator
	\[
	\mathcal H_L:
	\ell_1(\Sigma^*)
	\longrightarrow
	\ell_\infty(\Sigma^*)
	\]
	defined by
	\begin{equation}\label{eq:1}
	(\mathcal H_La)(v)
	=
	\sum_{u\in\Sigma^*}
	a(u)\mathbf 1_L(uv)\quad \text{for}\quad v\in\Sigma^*.
	\end{equation}
\end{definition}

The series in (\ref{eq:1}) is absolutely convergent, since
\(\sum_{u\in\Sigma^*}
|a(u)\mathbf 1_L(uv)|
\leq
\|a\|_1\).

\begin{proposition}\label{prop:hankel-range}There holds
	\begin{itemize}
		\item \(\mathcal H_L\) is bounded
		\item \(\|\mathcal H_L\|\leq1\) 
		\item for every \(u\in\Sigma^*\) there holds
	\(\mathcal H_Le_u
	=
	\mathbf 1_{u^{-1}L}\)
	\item 
	\(\overline{\operatorname{ran}\mathcal H_L}
	=
	X_L\).
\end{itemize}
\end{proposition}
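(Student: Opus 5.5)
The plan is to verify the four assertions of Proposition~\ref{prop:hankel-range} directly from Definition~\ref{def:language-hankel-operator}, in the listed order. The range statement will follow from the formula \(\mathcal H_Le_u=\mathbf 1_{u^{-1}L}\), the norm bound, and density of finitely supported vectors in \(\ell_1(\Sigma^*)\).

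For boundedness and the norm bound, fix \(a\in\ell_1(\Sigma^*)\) and \(v\in\Sigma^*\). Since \(0\le \mathbf 1_L(uv)\le 1\), the absolute convergence remark after \eqref{eq:1} gives
\(|(\mathcal H_La)(v)|\le\sum_{u}|a(u)|=\|a\|_1\).
Taking the supremum over \(v\) shows \(\mathcal H_La\in\ell_\infty(\Sigma^*)\) and \(\|\mathcal H_La\|_\infty\le\|a\|_1\). Linearity is immediate from linearity of absolutely convergent sums. Hence \(\mathcal H_L\) is bounded with \(\|\mathcal H_L\|\le1\).

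For the generators, insert \(a=e_u\) into \eqref{eq:1}. This gives \((\mathcal H_Le_u)(v)=\mathbf 1_L(uv)\). By Definition~\ref{def:left-quotient}, \(uv\in L\) if and only if \(v\in u^{-1}L\), so \(\mathcal H_Le_u=\mathbf 1_{u^{-1}L}\).

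For the range, I first show \(\operatorname{ran}\mathcal H_L\subseteq X_L\). Let \(c_{00}(\Sigma^*)\) be the finitely supported vectors; it is dense in \(\ell_1(\Sigma^*)\) because \(\Sigma^*\) is countable and tails of absolutely summable families tend to zero. By linearity and the previous step, \(\mathcal H_L(c_{00})=\operatorname{span}\{\mathbf 1_{u^{-1}L}:u\in\Sigma^*\}\). Continuity of \(\mathcal H_L\) then gives \(\mathcal H_L(\ell_1)\subseteq\overline{\mathcal H_L(c_{00})}=X_L\), and \(X_L\) is closed, so \(\overline{\operatorname{ran}\mathcal H_L}\subseteq X_L\). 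Conversely, \(\operatorname{ran}\mathcal H_L\) contains this span, whose closure is \(X_L\), so \(X_L\subseteq\overline{\operatorname{ran}\mathcal H_L}\).

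No step is a real obstacle. The only point needing care is the density and continuity argument, in particular noting that the range of \(\mathcal H_L\) need not itself be closed, which is why the statement uses the closure.
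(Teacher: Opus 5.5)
Your proposal is correct and follows the paper's proof essentially step for step: the same pointwise estimate gives \(\|\mathcal H_La\|_\infty\le\|a\|_1\), and substituting \(e_u\) identifies the generators \(\mathbf 1_{u^{-1}L}\). Density of finitely supported vectors, together with continuity of \(\mathcal H_L\) and closedness of \(X_L\), then gives both inclusions in \(\overline{\operatorname{ran}\mathcal H_L}=X_L\); one harmless remark is that this density holds in \(\ell_1\) over any index set, so countability of \(\Sigma^*\) is not needed.
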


\begin{proof}
	Let \(a\in\ell_1(\Sigma^*)\). For every \(v\in\Sigma^*\),
	\begin{equation}\label{eq:11}
		|(\mathcal H_La)(v)|=
		\left|
		\sum_{u\in\Sigma^*}
		a(u)\mathbf 1_L(uv)
		\right|\leq
		\sum_{u\in\Sigma^*}
		|a(u)|\mathbf 1_L(uv)\leq
		\sum_{u\in\Sigma^*}|a(u)|
		=
		\|a\|_1.
\end{equation}
	
	Taking the supremum over \(v\in\Sigma^*\) in (\ref{eq:11}), we obtain
	\(\|\mathcal H_La\|_\infty
	\leq
	\|a\|_1\).
	Hence \(\mathcal H_L\) is bounded and
	\(\|\mathcal H_L\|\leq1\).
	
	Let \(e_u\) denote the canonical unit vector of
	\(\ell_1(\Sigma^*)\) corresponding to \(u\in\Sigma^*\). Then, for
	every \(v\in\Sigma^*\),
	\[
		(\mathcal H_Le_u)(v)=
		\sum_{w\in\Sigma^*}
		e_u(w)\mathbf 1_L(wv)=
		\mathbf 1_L(uv)=
		\mathbf 1_{u^{-1}L}(v).
	\]
	
	Therefore
	\(\mathcal H_Le_u
	=
	\mathbf 1_{u^{-1}L}\). 
It follows that every generator of \(X_L\) belongs to
	\(\operatorname{ran}\mathcal H_L\), and hence
	\(\operatorname{span}
	\left\{
	\mathbf 1_{u^{-1}L}:u\in\Sigma^*
	\right\}
	\subseteq
	\operatorname{ran}\mathcal H_L\).
It follows that every member of the generating family
		\(\{\mathbf 1_{u^{-1}L}:u\in\Sigma^*\}\)
		belongs to \(\operatorname{ran}\mathcal H_L\).
		Therefore,
		\(X_L
		=
		\overline{\operatorname{span}}
		\{\mathbf 1_{u^{-1}L}:u\in\Sigma^*\}
		\subseteq
		\overline{\operatorname{ran}\mathcal H_L}\).
	
	Taking closures gives
	\begin{equation}\label{eq:XL-in-hankel-range}
	X_L
	\subseteq
	\overline{\operatorname{ran}\mathcal H_L}.
\end{equation}
	
	For the converse inclusion, let \(a\in\ell_1(\Sigma^*)\). Choose a
	sequence of finitely supported vectors \((a_n)\) such that
	\(a_n\longrightarrow a\) in \(\ell_1(\Sigma^*)\).
	
	For each \(n\), the vector \(\mathcal H_La_n\) is a finite linear
	combination of the functions
	\(\mathbf 1_{u^{-1}L}\), and therefore
	\(\mathcal H_La_n\in X_L\).
	
	Since \(\mathcal H_L\) is bounded,
	\(\mathcal H_La_n
	\longrightarrow
	\mathcal H_La\) in \(ell_\infty(\Sigma^*)\).
	
	As \(X_L\) is closed, this implies
	\(\mathcal H_La\in X_L\).
	Thus
	\(\operatorname{ran}\mathcal H_L\subseteq X_L\),
	and consequently
	\begin{equation}\label{eq:hankel-range-in-XL}
	\overline{\operatorname{ran}\mathcal H_L}
	\subseteq X_L.
\end{equation}
	
	Combining \eqref{eq:XL-in-hankel-range} and
	\eqref{eq:hankel-range-in-XL}, we conclude that
	\(\overline{\operatorname{ran}\mathcal H_L}
	=
	X_L\).
\end{proof}

\begin{corollary}\label{cor:hankel-rank-dimension}
	If \(L\) is regular, then
	\(\operatorname{rank}\mathcal H_L
	=
	\dim X_L\).
\end{corollary}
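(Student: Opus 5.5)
When \(L\) is regular I will show that \(\operatorname{ran}\mathcal H_L\) coincides with \(X_L\). Since the rank of an operator is the dimension of its range, the claimed equality then follows at once.

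First, by Theorem~\ref{thm:myhill-nerode}, regularity of \(L\) means the derivative family \(\mathcal D(L)\) is finite. Hence Proposition~\ref{prop:dimension-derivatives} shows that \(X_L\) is finite-dimensional. Its proof also gives an explicit description: with \(w_1,\ldots,w_N\) representing the distinct left derivatives,
\[
X_L=\operatorname{span}\{\mathbf 1_{w_1^{-1}L},\ldots,\mathbf 1_{w_N^{-1}L}\}.
\]

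Second, I compare this with the range of \(\mathcal H_L\). By Proposition~\ref{prop:hankel-range}, we have \(\mathcal H_Le_u=\mathbf 1_{u^{-1}L}\) for every \(u\). Therefore each generator \(\mathbf 1_{w_i^{-1}L}\) lies in \(\operatorname{ran}\mathcal H_L\), and since the range is a linear subspace, \(X_L\subseteq\operatorname{ran}\mathcal H_L\). Conversely, Proposition~\ref{prop:hankel-range} gives \(\operatorname{ran}\mathcal H_L\subseteq\overline{\operatorname{ran}\mathcal H_L}=X_L\). Combining the two inclusions yields \(\operatorname{ran}\mathcal H_L=X_L\), and so \(\operatorname{rank}\mathcal H_L=\dim\operatorname{ran}\mathcal H_L=\dim X_L<\infty\).

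There is essentially no obstacle here. The only point needing care is that the rank is the dimension of the range itself, not of its closure. This is handled by the observation that the finitely many generators are attained exactly as \(\mathcal H_Le_{w_i}\), so the range already contains all of \(X_L\) and no closure argument is needed. As an alternative route, one could note that a finite-dimensional subspace is closed, but closedness alone does not show that \(\operatorname{ran}\mathcal H_L\) equals its closure; the explicit preimages \(e_{w_i}\) are what settle this.
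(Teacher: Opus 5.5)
Your proof is correct, and it differs from the paper's only in how you get \(X_L\subseteq\operatorname{ran}\mathcal H_L\).

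The paper takes finite-dimensionality of \(X_L\) from Corollary~\ref{cor:main-language-rigidity}. It then observes that \(\operatorname{ran}\mathcal H_L\subseteq X_L\) is a finite-dimensional, hence closed, subspace, so \(\operatorname{ran}\mathcal H_L=\overline{\operatorname{ran}\mathcal H_L}=X_L\). You instead go through Myhill--Nerode and Proposition~\ref{prop:dimension-derivatives}, and exhibit the explicit preimages \(e_{w_i}\) of a finite spanning set. Your route is slightly more self-contained, since the superreflexivity content of the rigidity corollary plays no role here. It also gives \(\operatorname{rank}\mathcal H_L\le N\) directly.

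Your closing remark, however, is wrong. The ``alternative route'' you dismiss is exactly the paper's argument, and it is valid. Since \(\operatorname{ran}\mathcal H_L\) is a linear subspace of the finite-dimensional space \(X_L\), it is itself finite-dimensional, hence closed, hence equal to its closure. Your own argument also relies on a closure fact. Proposition~\ref{prop:dimension-derivatives} uses closedness of the finite-dimensional span of the generators to write \(X_L\) as an algebraic span. So the explicit preimages do not avoid a closedness argument; they apply it to a different subspace.
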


\begin{proof}
	Since \(L\) is regular, Corollary~\ref{cor:main-language-rigidity}
	implies that \(X_L\) is finite-dimensional.
	
	By Proposition~\ref{prop:hankel-range},
	\(\overline{\operatorname{ran}\mathcal H_L}=X_L\).
	Moreover,
	\(\operatorname{ran}\mathcal H_L\subseteq X_L\).
	
	As \(X_L\) is finite-dimensional, every linear subspace of \(X_L\) is
	closed. Hence \(\operatorname{ran}\mathcal H_L\) is closed, and
	therefore
	\(\operatorname{ran}\mathcal H_L
	=
	\overline{\operatorname{ran}\mathcal H_L}
	=
	X_L\).
	
	Consequently,
	\(\operatorname{rank}\mathcal H_L
	=
	\dim\operatorname{ran}\mathcal H_L
	=
	\dim X_L\).
\end{proof}

\begin{theorem}\label{thm:compact-hankel}
	For every language \(L\subseteq\Sigma^*\), the following assertions
	are equivalent:
	\begin{enumerate}
		\item \(L\) is regular;
		\item \(\mathcal H_L\) has finite rank;
		\item \(\mathcal H_L\) is compact.
	\end{enumerate}
\end{theorem}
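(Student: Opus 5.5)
We prove the cycle of implications (1)\(\Rightarrow\)(2)\(\Rightarrow\)(3)\(\Rightarrow\)(1).

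\emph{(1)\(\Rightarrow\)(2).} If \(L\) is regular, then Corollary~\ref{cor:hankel-rank-dimension} together with Corollary~\ref{cor:main-language-rigidity} gives \(\operatorname{rank}\mathcal H_L=\dim X_L<\infty\). One can also see this directly: by Proposition~\ref{prop:hankel-range}, \(\operatorname{ran}\mathcal H_L\subseteq X_L\), and Proposition~\ref{prop:dimension-derivatives} gives \(\dim X_L\le N\), where \(N\) is the (finite, by Theorem~\ref{thm:myhill-nerode}) number of left derivatives.

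\emph{(2)\(\Rightarrow\)(3).} This is standard: a bounded finite-rank operator maps the unit ball to a bounded subset of a finite-dimensional space. That subset is relatively compact, so \(\mathcal H_L\) is compact.

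\emph{(3)\(\Rightarrow\)(1).} This is the substantive step, and I would avoid the heavy rigidity machinery entirely. Suppose \(\mathcal H_L\) is compact. Its unit ball image \(\mathcal H_L(B_{\ell_1(\Sigma^*)})\) is then relatively compact in \(\ell_\infty(\Sigma^*)\). Since \(\|e_u\|_1=1\), this set contains every \(\mathcal H_Le_u=\mathbf 1_{u^{-1}L}\) by Proposition~\ref{prop:hankel-range}. Hence the family \(\mathcal D_L=\{\mathbf 1_{u^{-1}L}:u\in\Sigma^*\}\subset\{0,1\}^{\Sigma^*}\) is relatively compact. Lemma~\ref{lem:finite-valued-separated}, applied with \(F=\{0,1\}\), says that an infinite subset of \(F^{\Sigma^*}\) is \(1\)-separated and thus not relatively compact. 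So \(\mathcal D_L\) is finite. Because distinct sets have distinct characteristic functions, the family of left derivatives is finite, and Theorem~\ref{thm:myhill-nerode} yields that \(L\) is regular.

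The only point requiring care is the degenerate case in Lemma~\ref{lem:finite-valued-separated}, where \(F\) has a single element. With \(F=\{0,1\}\) this case does not arise, and the separation constant is exactly \(1\). I therefore expect no real obstacle. The main thing to check is that the unit vectors \(e_u\) indeed lie in the unit ball, so that compactness transfers directly to the generating family.
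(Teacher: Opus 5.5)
Your proposal is correct and takes essentially the same route as the paper. The implication (3)\(\Rightarrow\)(1) matches the paper step for step: the unit vectors \(e_u\) lie in the unit ball, \(\mathcal H_Le_u=\mathbf 1_{u^{-1}L}\), Lemma~\ref{lem:finite-valued-separated} with \(F=\{0,1\}\) forces finiteness, and Myhill--Nerode gives regularity. For (1)\(\Rightarrow\)(2), your direct variant through Proposition~\ref{prop:dimension-derivatives} just unpacks the trivial direction of Corollary~\ref{cor:main-language-rigidity}, which is all the paper actually uses there, so neither version depends on the spreading-model machinery.
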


\begin{proof}
	\((1)\Longrightarrow(2)\) Assume first that \(L\) is regular. By
	Corollary~\ref{cor:main-language-rigidity}, the space \(X_L\) is
	finite-dimensional. Proposition~\ref{prop:hankel-range} gives
	\(\operatorname{ran}\mathcal H_L\subseteq X_L\).
	
	Hence
	\(\operatorname{rank}\mathcal H_L
	\leq
	\dim X_L
	<
	\infty\).
	Thus \(\mathcal H_L\) has finite rank.
	
	\((2)\Longrightarrow(3)\)
	Every finite-rank operator between Banach spaces is compact.
	
	\((3)\Longrightarrow(1)\)
	Suppose that \(\mathcal H_L\) is compact. Let
	\((e_u)_{u\in\Sigma^*}\) denote the canonical unit vectors of
	\(\ell_1(\Sigma^*)\). 
	
	Since
	\(\|e_u\|_1=1\) for \(u\in\Sigma^*\), the set
	\(\{e_u:u\in\Sigma^*\}\)
	is contained in the closed unit ball of \(\ell_1(\Sigma^*)\).
	Compactness of \(\mathcal H_L\) therefore implies that
	\(\{\mathcal H_Le_u:u\in\Sigma^*\}\)
	is relatively compact in \(\ell_\infty(\Sigma^*)\).
	
	By Proposition~\ref{prop:hankel-range},
	\(\mathcal H_Le_u
	=
	\mathbf 1_{u^{-1}L}\),
	and hence
	\[
	\{\mathcal H_Le_u:u\in\Sigma^*\}
	=
	\mathcal A_L
	=
	\left\{
	\mathbf 1_{u^{-1}L}:u\in\Sigma^*
	\right\}
	\subseteq
	\{0,1\}^{\Sigma^*}.
	\]
	
	By Lemma~\ref{lem:finite-valued-separated}, an infinite family of
	functions taking values in the fixed finite set \(\{0,1\}\) cannot
	be relatively compact in \(\ell_\infty(\Sigma^*)\). Consequently,
	\(\mathcal A_L\) is finite.
	
	Thus \(L\) has only finitely many distinct left derivatives. According to
	Theorem \ref{thm:myhill-nerode}, \(L\) is regular. 
\end{proof}

\begin{theorem}\label{thm:hankel-approximation-gap}
	Let \(L\subseteq\Sigma^*\).
	
	\begin{enumerate}
		\item If \(L\) is regular, then \(\mathcal H_L\) has finite rank and 
	\[
	\operatorname{dist}
	\bigl(
	\mathcal H_L,
	\mathcal F(\ell_1(\Sigma^*),\ell_\infty(\Sigma^*))
	\bigr)
	=
	\operatorname{dist}
	\bigl(
	\mathcal H_L,
	\mathcal K(\ell_1(\Sigma^*),\ell_\infty(\Sigma^*))
	\bigr)
	=
	0.
	\]	
	\item If \(L\) is nonregular, then
	\[
	\operatorname{dist}
	\bigl(
	\mathcal H_L,
	\mathcal F(\ell_1(\Sigma^*),\ell_\infty(\Sigma^*))
	\bigr)
	=
	\operatorname{dist}
	\bigl(
	\mathcal H_L,
	\mathcal K(\ell_1(\Sigma^*),\ell_\infty(\Sigma^*))
	\bigr)
	=
	\frac12
	\]
	and
	\(
	a_n(\mathcal H_L)=\frac12\) for \(n\geq2)\).
\end{enumerate}
\end{theorem}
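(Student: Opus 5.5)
Part (1) follows at once from Theorem~\ref{thm:compact-hankel}. If \(L\) is regular, then \(\mathcal H_L\) has finite rank. Since \(\mathcal F\subseteq\mathcal K\), both distances are attained at \(S=\mathcal H_L\) and vanish. All the work is in part (2). For that I would prove a matching upper bound \(1/2\) through a rank-one perturbation, and a lower bound \(1/2\) against all compact operators. The approximation numbers are then squeezed between these two bounds.

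For the upper bound, I would use that operators \(\ell_1(\Sigma^*)\to\ell_\infty(\Sigma^*)\) have norm equal to the supremum over unit vectors: \(\|T\|=\sup_{u}\|Te_u\|_\infty=\sup_{u,v}|(Te_u)(v)|\). Define the rank-one operator \(S a=\tfrac12\bigl(\sum_{u}a(u)\bigr)\mathbf 1_{\Sigma^*}\). Then the matrix of \(\mathcal H_L-S\) is \(\mathbf 1_L(uv)-\tfrac12\in\{\pm\tfrac12\}\). Exactly as in the estimate \eqref{eq:11}, this gives \(\|\mathcal H_L-S\|\le\tfrac12\). Since \(\operatorname{rank}S=1<2\), we obtain \(\operatorname{dist}(\mathcal H_L,\mathcal F)\le\tfrac12\) and \(a_2(\mathcal H_L)\le\tfrac12\). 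Because the \(a_n\) are nonincreasing in \(n\), it follows that \(a_n(\mathcal H_L)\le\tfrac12\) for every \(n\ge2\).

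For the lower bound, I would take an arbitrary compact \(K\in\mathcal K(\ell_1(\Sigma^*),\ell_\infty(\Sigma^*))\). Since \(L\) is nonregular, Theorem~\ref{thm:myhill-nerode} gives words \(u_1,u_2,\ldots\) with pairwise distinct derivatives. Hence \(\|\mathbf 1_{u_n^{-1}L}-\mathbf 1_{u_m^{-1}L}\|_\infty=1\) for \(n\ne m\), as in Lemma~\ref{lem:finite-valued-separated}. By compactness, \((Ke_{u_n})\) has a norm-Cauchy subsequence. Fix \(\varepsilon>0\) and pick \(n\ne m\) in it with \(\|Ke_{u_n}-Ke_{u_m}\|<\varepsilon\). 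Put \(a=\tfrac12(e_{u_n}-e_{u_m})\), so \(\|a\|_1=1\). Proposition~\ref{prop:hankel-range} gives \(\mathcal H_L a=\tfrac12(\mathbf 1_{u_n^{-1}L}-\mathbf 1_{u_m^{-1}L})\). Therefore
\[
\|(\mathcal H_L-K)a\|_\infty\ge\tfrac12-\tfrac{\varepsilon}{2}.
\]
Letting \(\varepsilon\to0\) gives \(\|\mathcal H_L-K\|\ge\tfrac12\), so \(\operatorname{dist}(\mathcal H_L,\mathcal K)\ge\tfrac12\).

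The conclusion then follows by chaining the bounds. Since \(\mathcal F\subseteq\mathcal K\),
\[
\tfrac12\le\operatorname{dist}(\mathcal H_L,\mathcal K)\le\operatorname{dist}(\mathcal H_L,\mathcal F)\le\tfrac12 .
\]
For \(n\ge2\), every \(R\) with \(\operatorname{rank}R<n\) is finite rank, so \(a_n(\mathcal H_L)\ge\operatorname{dist}(\mathcal H_L,\mathcal F)=\tfrac12\). Together with the upper bound, \(a_n(\mathcal H_L)=\tfrac12\). The only step with real content is the lower bound against compact operators. There the key is to combine the exact separation \(1\) of Boolean derivatives with the Cauchy subsequence supplied by compactness. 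The rest is bookkeeping.
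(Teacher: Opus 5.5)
Your proof is correct and follows the paper's argument: the upper bound comes from the rank-one operator $a\mapsto\tfrac12\bigl(\sum_u a(u)\bigr)\mathbf 1$, and the lower bound against compact operators combines the unit separation of distinct Boolean derivatives with the relative compactness of $\{Ke_{u_n}\}$. The differences are cosmetic. You argue directly, using a Cauchy subsequence and the test vector $\tfrac12(e_{u_n}-e_{u_m})$, where the paper argues by contradiction via a $(1-2\delta)$-separated set. You also invoke monotonicity of $a_n$, where the paper simply notes that $\operatorname{rank}A=1<n$.
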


\begin{proof}
	Suppose first that \(L\) is regular. By
	Theorem~\ref{thm:compact-hankel}, the operator \(\mathcal H_L\) has
	finite rank. Therefore,
	\[
	\mathcal H_L
	\in
	\mathcal F(\ell_1(\Sigma^*),\ell_\infty(\Sigma^*))
	\subseteq
	\mathcal K(\ell_1(\Sigma^*),\ell_\infty(\Sigma^*)),
	\]
	and consequently
	\[
	\operatorname{dist}
	\bigl(
	\mathcal H_L,
	\mathcal F(\ell_1(\Sigma^*),\ell_\infty(\Sigma^*))
	\bigr)
	=
	\operatorname{dist}
	\bigl(
	\mathcal H_L,
	\mathcal K(\ell_1(\Sigma^*),\ell_\infty(\Sigma^*))
	\bigr)
	=
	0.
	\]
	
	Assume now that \(L\) is nonregular. Let
	\(\mathbf 1\in\ell_\infty(\Sigma^*)\) denote the constant function
	equal to one, and define
	\(A:\ell_1(\Sigma^*)\to\ell_\infty(\Sigma^*)\) by
	\(Aa
	=
	\frac12
	\left(
	\sum_{u\in\Sigma^*}a(u)
	\right)\mathbf 1\).
	
	The operator \(A\) has rank one. For every
	\(a\in\ell_1(\Sigma^*)\) and \(v\in\Sigma^*\), we have
	\[
	\bigl((\mathcal H_L-A)a\bigr)(v)
	=
	\sum_{u\in\Sigma^*}
	a(u)
	\left(
	\mathbf 1_L(uv)-\frac12
	\right).
	\]
	
	Since
	\(\left|
	\mathbf 1_L(uv)-\frac12
	\right|
	=
	\frac12\),
	we obtain
	\(\|(\mathcal H_L-A)a\|_\infty
	\leq
	\frac12
	\sum_{u\in\Sigma^*}|a(u)|
	=
	\frac12\|a\|_1\).
	
	Hence,
	\(\|\mathcal H_L-A\|\leq\frac12\).
	In fact, for every \(u\in\Sigma^*\),
	\[
		\|(\mathcal H_L-A)e_u\|_\infty
		=
		\left\|
		\mathbf 1_{u^{-1}L}-\frac12\mathbf 1
		\right\|_\infty=
		\frac12,
	\]
	and therefore
	\(\|\mathcal H_L-A\|=\frac12\).
	
		Since \(A\) has rank one, it belongs to both the class of
		finite-rank operators and the class of compact operators:
		\[
		A\in
		\mathcal F(\ell_1(\Sigma^*),\ell_\infty(\Sigma^*))
		\subseteq
		\mathcal K(\ell_1(\Sigma^*),\ell_\infty(\Sigma^*)).
		\]
		By the definition of the distance from an operator to a class of
		operators, we have
		\[
		\operatorname{dist}
			\bigl(
			\mathcal H_L,
			\mathcal K(\ell_1(\Sigma^*),\ell_\infty(\Sigma^*))
			\bigr)\qquad =
			\inf_{T\in
				\mathcal K(\ell_1(\Sigma^*),\ell_\infty(\Sigma^*))}
			\|\mathcal H_L-T\|\leq
			\|\mathcal H_L-A\|
			=
			\frac12.
		\]
		
		Thus,

	\begin{equation}\label{eq:compact-distance-upper}
		\operatorname{dist}
		\bigl(
		\mathcal H_L,
		\mathcal K(\ell_1(\Sigma^*),\ell_\infty(\Sigma^*))
		\bigr)
		\leq
		\frac12.
	\end{equation}
	
		The same rank-one operator also gives
		
	\begin{equation}\label{eq:finite-rank-distance-upper}
		\operatorname{dist}
		\bigl(
		\mathcal H_L,
		\mathcal F(\ell_1(\Sigma^*),\ell_\infty(\Sigma^*))
		\bigr)
		\leq
		\|\mathcal H_L-A\|
		=
		\frac12.
	\end{equation}
	
	We now prove the reverse inequality. Since \(L\) is nonregular,
	Theorem~\ref{thm:myhill-nerode} implies that \(L\) has infinitely
	many distinct left derivatives. Hence, we can choose
	\(u_1,u_2,\ldots\in\Sigma^*\) such that
	\(u_m^{-1}L\neq u_n^{-1}L\) whenever \(m\neq n\).
	Therefore,
	\[
	\left\|
	\mathbf 1_{u_m^{-1}L}
	-
	\mathbf 1_{u_n^{-1}L}
	\right\|_\infty
	=
	1
	\qquad (m\neq n).
	\]
	
	Suppose, towards a contradiction, that there exists a compact operator
	\[
	T:\ell_1(\Sigma^*)\to\ell_\infty(\Sigma^*)
	\]
	such that
	\(\|\mathcal H_L-T\|
	=
	\delta
	<
	\frac12\).
	
	For \(m\neq n\), using
	\(\mathcal H_Le_{u_j}
	=
	\mathbf 1_{u_j^{-1}L}\),
	we obtain
	\[
	\begin{aligned}
		\|Te_{u_m}-Te_{u_n}\|_\infty
		&\geq
		\|\mathcal H_Le_{u_m}
		-\mathcal H_Le_{u_n}\|_\infty\\
		&\quad
		-\|(\mathcal H_L-T)e_{u_m}\|_\infty
		-\|(\mathcal H_L-T)e_{u_n}\|_\infty\geq
		1-2\delta
		>
		0.
	\end{aligned}
	\]
	
	Thus,
	\(\{Te_{u_n}:n\in\mathbb N\}\)
	is an infinite \((1-2\delta)\)-separated set.
	
	On the other hand, \((e_{u_n})\) is contained in the unit ball of
	\(\ell_1(\Sigma^*)\), and the compactness of \(T\) implies that
	\(\{Te_{u_n}:n\in\mathbb N\}\)
	is relatively compact. This is impossible, since a relatively compact
	metric space cannot contain an infinite set whose distinct elements
	are separated by a fixed positive distance.
	
		Therefore, there is no compact operator \(T\) satisfying
		\(\|\mathcal H_L-T\|<\frac12\). By the definition of the distance
		to the class of compact operators, it follows that

	\begin{equation}\label{eq:compact-distance-lower}
		\operatorname{dist}
		\bigl(
		\mathcal H_L,
		\mathcal K(\ell_1(\Sigma^*),\ell_\infty(\Sigma^*))
		\bigr)
		\geq
		\frac12.
	\end{equation}
	
		Combining the upper estimate
		\eqref{eq:compact-distance-upper} with the lower estimate
		\eqref{eq:compact-distance-lower}, we obtain
	\[
	\operatorname{dist}
	\bigl(
	\mathcal H_L,
	\mathcal K(\ell_1(\Sigma^*),\ell_\infty(\Sigma^*))
	\bigr)
	=
	\frac12.
	\]
	
	Since
	\(\mathcal F(\ell_1(\Sigma^*),\ell_\infty(\Sigma^*))
	\subseteq
	\mathcal K(\ell_1(\Sigma^*),\ell_\infty(\Sigma^*))\),
		the distance to the smaller class cannot be smaller than the distance
		to the larger class. Hence,
	\[
	\operatorname{dist}
		\bigl(
		\mathcal H_L,
		\mathcal F(\ell_1(\Sigma^*),\ell_\infty(\Sigma^*))
		\bigr)\geq
		\operatorname{dist}
		\bigl(
		\mathcal H_L,
		\mathcal K(\ell_1(\Sigma^*),\ell_\infty(\Sigma^*))
		\bigr)
		=
		\frac12.
	\]
	
		Together with the upper estimate
		\eqref{eq:finite-rank-distance-upper}, this yields
	\[
	\operatorname{dist}
	\bigl(
	\mathcal H_L,
	\mathcal F(\ell_1(\Sigma^*),\ell_\infty(\Sigma^*))
	\bigr)
	=
	\frac12.
	\]
	
	Finally, let \(n\geq2\). By the definition of the \(n\)-th
	approximation number,
	\[
	a_n(\mathcal H_L)
	=
	\inf
	\left\{
	\|\mathcal H_L-T\|:
	\operatorname{rank}T<n
	\right\}.
	\]
	
		Every operator \(T\) satisfying \(\operatorname{rank}T<n\) has
		finite rank. Therefore, the class over which the infimum defining
		\(a_n(\mathcal H_L)\) is taken is contained in
		\(\mathcal F(\ell_1(\Sigma^*),\ell_\infty(\Sigma^*))\). Consequently,
	\[
		a_n(\mathcal H_L)
		\geq
		\operatorname{dist}
		\bigl(
		\mathcal H_L,
		\mathcal F(\ell_1(\Sigma^*),\ell_\infty(\Sigma^*))
		\bigr)=
		\frac12.
	\]
	
	On the other hand,
	\(\operatorname{rank}A=1<n\) and \(\|\mathcal H_L-A\|=\frac12\).
	
	Therefore,
	\(a_n(\mathcal H_L)\leq\frac12\).
	Consequently,
	\(a_n(\mathcal H_L)=\frac12\) for every \(n\geq2\).
\end{proof}

\section{Universality}
\label{sec:universality}

The rigidity results established above show that superreflexivity is
extremely restrictive for Banach spaces generated by formal-language
derivatives. We now show that, once this restriction is removed, the
class of spaces \(X_L\) is remarkably rich. In fact, a single binary
language generates a Banach space containing an isometric copy of every
separable real Banach space.

\begin{lemma}\label{lem:binary-code-uniqueness}
	For \(i,j\in\mathbb N_0\), define
	\(U_i=1^i0\) and \(V_j=0^j1\).
	Then 
	\begin{enumerate}
		\item the family
	\(\{U_i:i\in\mathbb N_0\}\)
	is prefix-free 
	\item \(U_iw=U_kV_j\)
	if and only if \(i=k\) and \(w=V_j\).
\end{enumerate}
\end{lemma}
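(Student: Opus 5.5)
The plan is a direct combinatorial argument on words, using only left cancellation in \(\Sigma^*=\{0,1\}^*\) and the positions of the letter \(0\).

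For part 1, take \(i\neq k\), say \(i<k\). Suppose \(U_i=1^i0\) were a prefix of \(U_k=1^k0\). Then the \((i+1)\)-st letter of \(U_k\) would equal the \((i+1)\)-st letter of \(U_i\), which is \(0\). But since \(i+1\leq k\), the \((i+1)\)-st letter of \(U_k\) is \(1\), a contradiction. The case \(i>k\) is the same with the roles exchanged, after noting that \(U_k\) is strictly shorter. An equivalent way to see this is that \(U_i\) is characterized as the unique word of the form \(1^m0\) whose first \(0\) sits at position \(i+1\). So no \(U_i\) is a proper prefix of another, and distinct indices give distinct words.

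For part 2, the "if" direction is immediate: if \(i=k\) and \(w=V_j\), then \(U_iw=U_kV_j\) trivially. For the "only if" direction, assume \(U_iw=U_kV_j\). Both words begin with a run of \(1\)'s followed by a \(0\). The position of the first \(0\) in \(U_iw\) is \(i+1\), because \(U_i=1^i0\) is a prefix. The position of the first \(0\) in \(U_kV_j\) is \(k+1\), for the same reason. Hence \(i=k\). Then \(U_iw=U_iV_j\), and left cancellation in the free monoid (recalled in the preliminaries) gives \(w=V_j\).

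I do not expect any real obstacle. The only care needed is to make precise that the index \(i\) is recovered as the length of the maximal initial block of \(1\)'s. This rests on every word with prefix \(1^i0\) having exactly \(i\) leading \(1\)'s, which is where prefix-freeness really comes from. I would state that observation once and use it for both parts.
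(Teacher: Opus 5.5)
Your proof is correct and follows the paper's argument: part 1 is the same comparison at position \(i+1\) together with a length argument, and part 2 first establishes \(i=k\) and then applies left cancellation. The only difference is cosmetic: you read off \(i\) directly as the number of leading \(1\)'s, while the paper invokes prefix-freeness of \(\{U_i\}\), which tacitly uses that two prefixes of the same word are comparable, so your version simply makes that step explicit.
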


\begin{proof}
	1. We first show that the family
	\(\{U_i:i\in\mathbb N_0\}\)
	is prefix-free. 
	
	Let \(i,k\in\mathbb N_0\) with \(i<k\). Then
	\(U_i=1^i0\) and \(U_k=1^k0\).
	At position \(i+1\), the word \(U_i\) contains the symbol \(0\),
	whereas \(U_k\) contains the symbol \(1\). Hence \(U_i\) is not a
	prefix of \(U_k\). Since \(U_k\) is longer than \(U_i\), \(U_k\)
	cannot be a prefix of \(U_i\). Thus no two distinct words in the
	family are prefixes of one another.
	
	2. Now suppose that
	\(U_iw=U_kV_j\)
	Both \(U_i\) and \(U_k\) are prefixes of the same word. Since the
	family \(\{U_i:i\in\mathbb N_0\}\) is prefix-free, it follows that
	\(i=k\)
	Therefore
	\(U_iw=U_iV_j\).
	
By left cancellation in the free monoid \(\{0,1\}^*\), we obtain
	\(w=V_j\).
	
Since
	\(U_iw=U_iV_j\),
	the left cancellation law for concatenation of words yields
	\(w=V_j\).
	
	Conversely, if \(i=k\) and \(w=V_j\), then clearly
	\(U_iw=U_kV_j\).
\end{proof}

\begin{theorem}\label{thm:universal-binary-language}
	There exists a language
	\(L_{\mathrm{univ}}\subseteq\{0,1\}^*\)
	such that \(X_{L_{\mathrm{univ}}}\) contains a closed subspace
	linearly isometric to \(C(\Delta)\), where
	\(\Delta=\{0,1\}^{\mathbb N}\)
	is the Cantor space.
\end{theorem}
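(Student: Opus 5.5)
The plan is to encode a countable dense set of points of \(\Delta\) and all clopen subsets of \(\Delta\) into a single binary language, using the prefix code of Lemma~\ref{lem:binary-code-uniqueness}. I will then show that the derivatives of this language by the code words \(U_i\) reproduce exactly the sup-norm geometry of the clopen simple functions on \(\Delta\).

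\textbf{Construction.} Enumerate the clopen subsets of \(\Delta\) as \((C_i)_{i\in\mathbb N_0}\); there are countably many, since each is a finite union of cylinders. Fix a countable dense subset \(\{d_j:j\in\mathbb N_0\}\subseteq\Delta\), for instance the eventually-zero sequences. Define
\[
L_{\mathrm{univ}}=\{U_iV_j:\ i,j\in\mathbb N_0,\ d_j\in C_i\}\subseteq\{0,1\}^*.
\]
By Lemma~\ref{lem:binary-code-uniqueness}(2), a word \(w\) satisfies \(U_iw\in L_{\mathrm{univ}}\) if and only if \(w=V_j\) for some \(j\) with \(d_j\in C_i\). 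Indeed, \(U_iw=U_kV_j\) forces \(k=i\) and \(w=V_j\). We also need the words \(V_j\) to be pairwise distinct, which is immediate. Hence
\[
\mathbf 1_{U_i^{-1}L_{\mathrm{univ}}}(w)=\begin{cases}\mathbf 1_{C_i}(d_j), & w=V_j,\\ 0, & \text{otherwise.}\end{cases}
\]

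\textbf{Isometry on simple functions.} For scalars \(a_0,\dots,a_n\), the previous display gives
\[
\Bigl\|\sum_i a_i\mathbf 1_{U_i^{-1}L_{\mathrm{univ}}}\Bigr\|_\infty=\sup_j\Bigl|\sum_i a_i\mathbf 1_{C_i}(d_j)\Bigr|.
\]
The function \(g=\sum_i a_i\mathbf 1_{C_i}\) is locally constant on \(\Delta\), hence continuous, and \(\{d_j\}\) is dense. Therefore this supremum equals \(\|g\|_{C(\Delta)}\). So the assignment \(\mathbf 1_{C_i}\mapsto\mathbf 1_{U_i^{-1}L_{\mathrm{univ}}}\) extends linearly to the span \(S\) of the clopen indicators, and the extension is well defined and isometric. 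Well-definedness uses the same argument as in Theorem~\ref{thm:dynamical-representation}: a vanishing combination on the \(C(\Delta)\) side has zero norm, so its image has zero norm too. Since every clopen set appears among the \(C_i\), the domain is all of \(S\).

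\textbf{Extension and conclusion.} By Theorem~\ref{thm:clopen-density}, \(S\) is dense in \(C(\Delta)\). The isometry therefore extends uniquely to a linear isometry \(\Phi:C(\Delta)\to X_{L_{\mathrm{univ}}}\), whose values lie in \(X_{L_{\mathrm{univ}}}\) because this space is closed. The range of an isometry defined on a Banach space is complete, hence closed. Thus \(\Phi(C(\Delta))\) is the required closed subspace of \(X_{L_{\mathrm{univ}}}\).

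The only delicate point is the exact identification of the derivatives \(U_i^{-1}L_{\mathrm{univ}}\): no stray words may contribute to the sup norm. This is exactly where the prefix-freeness and cancellation statement of Lemma~\ref{lem:binary-code-uniqueness} is used. The replacement of \(\sup_j\) by \(\sup_\Delta\) relies on the density of \(\{d_j\}\) together with the continuity of clopen simple functions.
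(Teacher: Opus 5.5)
Your proof is correct and follows essentially the same route as the paper. It uses the same language \(\{U_iV_j: d_j\in C_i\}\) built from the prefix code of Lemma~\ref{lem:binary-code-uniqueness} and the same exact identification of the derivatives \(U_i^{-1}L_{\mathrm{univ}}\), then the same density argument turning \(\sup_j\) into \(\|\cdot\|_{C(\Delta)}\) and the same extension by continuity from the clopen simple functions. Your explicit choice of the eventually-zero sequences as the dense set and your remark that the \(V_j\) are pairwise distinct only make explicit details the paper leaves implicit.
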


\begin{proof}
	Let \((C_i)_{i\in\mathbb N_0}\) be an enumeration of the clopen
	subsets of \(\Delta\) that are both closed and open, and let
	\(\{z_j\}_{j\in\mathbb N_0}\)
	be a dense sequence in \(\Delta\). Such an enumeration exists because
	the family of clopen subsets of the Cantor space, which a simultaneously closed and open is countable.
	
	For \(i,j\in\mathbb N_0\), let
	\(U_i=1^i0\), \(V_j=0^j1\), and define
	\[
	L_{\mathrm{univ}}
	=
	\left\{
	U_iV_j:
	i,j\in\mathbb N_0,\;
	z_j\in C_i
	\right\}
	\subseteq\{0,1\}^*.
	\]
	
	We first determine the left derivatives corresponding to the words
	\(U_i\). For \(i,j\in\mathbb N_0\), there holds
	\begin{equation}\label{eq:universal-code-values}
		\mathbf 1_{U_i^{-1}L_{\mathrm{univ}}}(V_j)
		=
		\mathbf 1_{L_{\mathrm{univ}}}(U_iV_j)=
		\mathbf 1_{C_i}(z_j).
	\end{equation}
	
	Moreover, if \(w\notin\{V_j:j\in\mathbb N_0\}\), then
	\(\mathbf 1_{U_i^{-1}L_{\mathrm{univ}}}(w)=0\).
	
	Indeed, if \(U_iw\in L_{\mathrm{univ}}\), then for some
	\(k,j\in\mathbb N_0\), holds the equality
	\begin{equation}\label{eq:3}
	U_iw=U_kV_j.
	\end{equation}
	By Lemma~\ref{lem:binary-code-uniqueness} and (\ref{eq:3}), we get that
	\(i=k\) and \(w=V_j\).
	
	Let
	\(\mathcal A
	=
	\operatorname{span}
	\left\{
	\mathbf 1_{C_i}:i\in\mathbb N_0
	\right\}
	\subseteq C(\Delta)\).
	
	The linear span of the characteristic functions of clopen subsets of
	the Cantor space both closed and open is dense in \(C(\Delta)\). Thus
	\(\overline{\mathcal A}^{\,C(\Delta)}
	=
	C(\Delta)\).
	
	We define a linear map on \(\mathcal A\) formally by
	\[
	T
	\left(
	\sum_{i=0}^n a_i\mathbf 1_{C_i}
	\right)
	=
	\sum_{i=0}^n
	a_i\mathbf 1_{U_i^{-1}L_{\mathrm{univ}}}.
	\]
	
	We verify simultaneously that \(T\) is well defined and isometric.
	
	Let
	\(f=\sum_{i=0}^n a_i\mathbf 1_{C_i}\in\mathcal A\).
	Using \eqref{eq:universal-code-values}, we obtain
	\[
		\left\|
		Tf
		\right\|_\infty
		=
		\sup_{w\in\{0,1\}^*}
		\left|
		\sum_{i=0}^n
		a_i\mathbf 1_{U_i^{-1}L_{\mathrm{univ}}}(w)
		\right|=
		\sup_{j\in\mathbb N_0}
		\left|
		\sum_{i=0}^n
		a_i\mathbf 1_{C_i}(z_j)
		\right|=
		\sup_{j\in\mathbb N_0}|f(z_j)|.
	\]
	
	Since \((z_j)\) is dense in \(\Delta\) and \(f\) is continuous, there holds
	\[
	\sup_{j\in\mathbb N_0}|f(z_j)|
	=
	\sup_{z\in\Delta}|f(z)|
	=
	\|f\|_{C(\Delta)}.
	\]
	
	Consequently,
	\begin{equation}\label{eq:universal-isometry}
	\|Tf\|_\infty
	=
	\|f\|_{C(\Delta)}.
\end{equation}

	In particular, if a finite linear combination of the functions
	\(\mathbf 1_{C_i}\) represents the zero function, then the
	corresponding linear combination of
	\(\mathbf 1_{U_i^{-1}L_{\mathrm{univ}}}\) also vanishes. Hence \(T\)
	is well defined. Equation~\eqref{eq:universal-isometry} shows that
	\(T\) is a linear isometry.
	
	For every \(i\in\mathbb N_0\),
	\(\mathbf 1_{U_i^{-1}L_{\mathrm{univ}}}
	\in
	X_{L_{\mathrm{univ}}}\)
	and therefore
	\(T(\mathcal A)\subseteq X_{L_{\mathrm{univ}}}\).
	
	Since \(\mathcal A\) is dense in \(C(\Delta)\), the isometry \(T\)
	extends uniquely to a linear isometry
	\(\widetilde T:
	C(\Delta)
	\longrightarrow
	X_{L_{\mathrm{univ}}}\).
	The range of an isometry from a Banach space is closed. Hence
	\(\widetilde T(C(\Delta))\)
	is a closed subspace of \(X_{L_{\mathrm{univ}}}\) linearly isometric
	to \(C(\Delta)\).
	
	Thus \(X_{L_{\mathrm{univ}}}\) contains a closed subspace linearly
	isometric to \(C(\Delta)\).
\end{proof}

\begin{corollary}\label{cor:universal-separable-banach}
	There exists a binary language
	\(L_{\mathrm{univ}}\subseteq\{0,1\}^*\) such that every separable real
	Banach space admits a linear isometric embedding into
	\(X_{L_{\mathrm{univ}}}\).
\end{corollary}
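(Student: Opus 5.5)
The corollary follows by composing two linear isometric embeddings, with Theorem~\ref{thm:universal-binary-language} and Theorem~\ref{thm:banach-mazur} supplying the two pieces. We take \(L_{\mathrm{univ}}\subseteq\{0,1\}^*\) to be the language constructed in the proof of Theorem~\ref{thm:universal-binary-language}. That theorem gives a linear isometry \(\widetilde T:C(\Delta)\to X_{L_{\mathrm{univ}}}\) whose range is a closed subspace.

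Let \(Y\) be an arbitrary separable real Banach space. By the Cantor-space form of the Banach--Mazur theorem (Theorem~\ref{thm:banach-mazur}), there is a linear isometric embedding \(S:Y\to C(\Delta)\). We then set \(\Phi=\widetilde T\circ S:Y\to X_{L_{\mathrm{univ}}}\).

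The map \(\Phi\) is linear because it is a composition of linear maps. It is isometric since \(\|\Phi y\|_\infty=\|\widetilde T(Sy)\|_\infty=\|Sy\|_{C(\Delta)}=\|y\|_Y\) for every \(y\in Y\). Hence \(\Phi\) is a linear isometric embedding, and its range is automatically closed because \(Y\) is complete. Since \(L_{\mathrm{univ}}\) was fixed before \(Y\) was chosen, the same language works for every separable \(Y\), which is exactly the uniformity the corollary asserts.

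No step here is a genuine obstacle; the substantive work lies in Theorem~\ref{thm:universal-binary-language} and in the classical Theorem~\ref{thm:banach-mazur}. The only point to check is that Theorem~\ref{thm:banach-mazur} is used in its Cantor-space form, namely into \(C(\Delta)\) rather than \(C[0,1]\), so that it matches the target of \(\widetilde T\).
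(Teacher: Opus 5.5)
Your proposal is correct and follows the paper's proof exactly: fix the language from Theorem~\ref{thm:universal-binary-language}, embed an arbitrary separable real Banach space isometrically into \(C(\Delta)\) via Theorem~\ref{thm:banach-mazur}, and compose with the isometry \(C(\Delta)\to X_{L_{\mathrm{univ}}}\). Your remarks on linearity, norm preservation, closedness of the range, and the fact that \(L_{\mathrm{univ}}\) does not depend on \(Y\) are accurate, and they make explicit what the paper leaves implicit.
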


\begin{proof}
	By Theorem~\ref{thm:banach-mazur}, every separable real Banach space
	admits a linear isometric embedding into \(C(\Delta)\). By
	Theorem~\ref{thm:universal-binary-language}, \(C(\Delta)\) admits a
	linear isometric embedding into \(X_{L_{\mathrm{univ}}}\).
	Composing the two embeddings gives the result.
\end{proof}

\begin{remark}
	Since \(X_{L_{\mathrm{univ}}}\subseteq\ell_\infty(\{0,1\}^*)\),
	the preceding universality result is consistent with the classical
	universality of \(\ell_\infty\). The point here is more specific:
	universality is already attained by a separable closed subspace
	generated by the \(\{0,1\}\)-valued left derivatives of a single
	binary language.
\end{remark}

Thus the derivative-generated construction exhibits a sharp geometric
contrast. Superreflexivity forces \(X_L\) to be finite-dimensional,
whereas in the absence of this restriction a single binary language can
generate a space universal for the class of separable real Banach spaces.

\end{document}